\documentclass[12pt]{amsart}


\usepackage[utf8]{inputenc}
\usepackage{graphicx}
\usepackage{amsfonts}
\usepackage{amsmath}
\usepackage{amsthm}
\usepackage{amssymb}
\usepackage{pgfplots}
\pgfplotsset{compat=1.17}
\usepackage{float}
\usepackage{upgreek}
\usepackage{url}

\usepackage{siunitx}
\usepackage{booktabs}
\usepackage{enumitem}
\usepackage{multicol}
\usepackage{silence}
\WarningFilter{latex}{Command \showhyphens has changed}
\usepackage{mathrsfs}
\usepackage{algpseudocode}
\usepackage{adjustbox}
\usepackage{algorithm}
\usepackage{mathtools}
\usepackage{tikz-cd}
\usepackage[font=small,labelfont=bf]{caption}
\usepackage{wrapfig}
\usepackage[margin=1.24in]{geometry}
\usepackage{datetime}
\usepackage[
backend=biber,
style=ieee,
citestyle=numeric-comp,
sorting=nyt,
dashed=false
]{biblatex}
\usepackage{hyperref}
\usepackage{xurl}
\hypersetup{
	colorlinks=true,
	linkcolor=blue,
	citecolor=red,
	breaklinks=true
}

\newdateformat{monthyeardate}{\monthname[\THEMONTH] \THEYEAR}

\addbibresource{ref.bib}

\title[Empirical plunge profiles of localization operators]{Empirical plunge profiles of\\ time-frequency localization operators}

\author{Simon Halvdansson}
\address{Department of Mathematical Sciences, Norwegian University of Science and Technology, 7491 Trondheim, Norway.}
\email{simon.halvdansson@ntnu.no}

\date{\monthyeardate\today}

\makeatletter
\newtheorem*{rep@theorem}{\rep@title}
\newcommand{\newreptheorem}[2]{%
	\newenvironment{rep#1}[1]{%
		\def\rep@title{#2 \ref{##1}}%
		\begin{rep@theorem}}%
		{\end{rep@theorem}}}
\makeatother

\let\originalleft\left
\let\originalright\right
\renewcommand{\left}{\mathopen{}\mathclose\bgroup\originalleft}
\renewcommand{\right}{\aftergroup\egroup\originalright}

\makeatletter
\newtheorem*{rep@corollary}{\rep@title} %
\newcommand{\newrepcorollary}[2]{%
	\newenvironment{rep#1}[1]{%
		\def\rep@title{#2 \ref{##1}}%
		\begin{rep@corollary}}%
		{\end{rep@corollary}}}
\makeatother

\theoremstyle{plain}
\newtheorem{theorem}{Theorem}[section]
\newreptheorem{theorem}{Theorem}
\newtheorem*{theorem*}{Theorem}
\newtheorem{lemma}[theorem]{Lemma}
\newtheorem{proposition}[theorem]{Proposition}

\newrepcorollary{corollary}{Corollary}

\theoremstyle{definition}

\newtheorem{conjecture}[theorem]{Conjecture}

\theoremstyle{remark}
\newtheorem*{remark}{Remark}

\newcommand{\C}{\mathbb{C}}
\newcommand{\R}{\mathbb{R}}

\newcommand{\N}{\mathbb{N}}

\newcommand{\tr}{\operatorname{tr}}

\makeatletter
\newcommand{\vast}{\bBigg@{4}}
\newcommand{\Vast}{\bBigg@{5}}
\makeatother

\DeclareFontFamily{U}{mathx}{\hyphenchar\font45}
\DeclareFontShape{U}{mathx}{m}{n}{
	<5> <6> <7> <8> <9> <10>
	<10.95> <12> <14.4> <17.28> <20.74> <24.88>
	mathx10
}{}
\DeclareSymbolFont{mathx}{U}{mathx}{m}{n}
\DeclareFontSubstitution{U}{mathx}{m}{n}
\DeclareMathAccent{\widecheck}{0}{mathx}{"71}
\DeclareMathAccent{\wideparen}{0}{mathx}{"75}

\def\XXint#1#2#3{{\setbox0=\hbox{$#1{#2#3}{\int}$ }
		\vcenter{\hbox{$#2#3$ }}\kern-.6\wd0}}

\begin{document}
    \maketitle
    \begin{abstract}\vspace{-9mm}
        For time-frequency localization operators, related to the short-time Fourier transform, with symbol $R\Omega$, we work out the exact large $R$ eigenvalue behavior for rotationally invariant $\Omega$ and conjecture that the same relation holds for all scaled symbols $R \Omega$ as long as the window is the standard Gaussian. Specifically, we conjecture that the $k$-th eigenvalue of the localization operator with symbol $R\Omega$ converges to $\frac{1}{2}\operatorname{erfc}\big( \sqrt{2\pi}\frac{k-R^2|\Omega|}{R|\partial \Omega|} \big)$ as $R \to \infty$. To support the conjecture, we compute the eigenvalues of discrete frame multipliers with various symbols using LTFAT and find that they agree with the behavior of the conjecture to a large degree.
        
        \vspace{3mm}
    \end{abstract}
    
    \renewcommand{\thefootnote}{\fnsymbol{footnote}}
    \footnotetext{\emph{Keywords:} Time-frequency analysis, localization operator, universality, plunge region}
    \renewcommand{\thefootnote}{\arabic{footnote}}

    \section{Introduction and background}
    When restricting a signal $f \in L^2(\R^d)$ to a subset of the time-frequency plane, there are two main approaches. The simpler is to consider a spatial cutoff, followed by a Fourier multiplier, followed by the same spatial cutoff once more. For sets $E, F \subset \R^{d}$ and $\mathcal{F}$ the Fourier transform, we can write such operators as
    \begin{align}\label{eq:fourier_concentration_operator}
        Sf = \chi_F \mathcal{F}^{-1} \chi_E \mathcal{F} \chi_F f
    \end{align}
    where $\chi_\Omega$ is the indicator function of the set $\Omega$. While such an operator does not yield a function compactly supported in both time and frequency, as that is prohibited by the uncertainty principle, it approximately does so provided $E, F$ are large enough. This line of work goes back to Landau, Pollak and Slepian, starting in the 1960s \cite{Slepian1961_I, Landau1961_II, Landau1962_III, Slepian1964_IV, Slepian1978_V}, who showed many of the classical properties of these operators which we will refer to as \emph{Fourier concentration operators} following \cite{Marceca2024}.
    
    Another more general way to restrict a signal to a subset $\Omega \subset \R^{2d}$ of the time-frequency plane is to apply the multiplication operator on a time-frequency representation of $f$. Specifically, using the \emph{short-time Fourier transform} (STFT) \cite{grochenig_book}, defined for a window function $g \in L^2(\R^d)$ as
    \begin{align*}
        V_g f(x,\omega) = \int_{\R^d} f(t) \overline{g(t-x)}e^{-2\pi i \omega \cdot t}\,dt = \langle f, \pi(x,\omega) g \rangle,
    \end{align*}
    where $\pi(x,\omega)f(t) = M_\omega T_x f(t) = e^{2\pi i \omega \cdot t} f(t-x)$ is a \emph{time-frequency shift}, we can define the \emph{localization operator} $A_\Omega^g$ as
    \begin{align*}
        A_\Omega^g f = \int_{\Omega} V_g f(x, \omega) \pi(x,\omega)g\,dx\,d\omega.
    \end{align*}
    If $\Omega = \R^{2d}$, this is just the identity operator on $L^2(\R^d)$, provided $g$ is normalized, but for a compact set $\Omega$ we get a compact self-adjoint operator. Localization operators were first considered in the time-frequency context by Daubechies \cite{daubechies1988_loc}.

    In both cases, the operators can be interpreted as projection operators onto either the subspace $F \times E$ or $\Omega$ of the time-frequency plane. The number of orthogonal functions which fit in these subspaces is approximately equal to the area of the subset of the time-frequency plane and consequently the corresponding eigenvalues are close to $1$. This is followed by what is commonly referred to as the \emph{plunge region} where the eigenvalues rapidly decay to $0$. All eigenvalues in the plunge region $\delta < \lambda_k < 1-\delta$ correspond to eigenfunctions which are partially supported inside and outside the subset $\Omega$ of the time-frequency plane. As these eigenfunctions are orthogonal, they each occupy a part of $\partial \Omega$ and we therefore expect the number of eigenvalues in the plunge region to depend on the size of $\partial \Omega$.

    \subsection{Earlier results}
    For Fourier concentration operators, these intuitions have been quantified with quite some success. The number of eigenvalues close to $1$ was shown to be approximately equal to $|E|\cdot |F|$ for $E, F$ intervals by Landau in \cite{Landau1975} where we take $|\cdot|$ to mean the area of a set. In particular, for any $\delta > 0$, the quantity
    \begin{align}\label{eq:fourier_concentration_close_to_1}
        \frac{\# \{ k : \lambda_k > 1-\delta \}}{ |E| \cdot |F|}
    \end{align}
    approaches $1$ as we dilate $E$ and $F$. The size of the plunge region, meaning the number of eigenvalues between $\delta$ and $1-\delta$, was also bounded by $\log(|E| \cdot |F|)$ up to a constant.

    In the more general setting of compact $E$ and $F$, Marceca, Romero and Speckbacher \cite{Marceca2024} recently showed under mild conditions that the size of the plunge region is bounded by
    \begin{align*}
        \frac{|\partial E|}{\kappa_{\partial E}}\frac{|\partial F|}{\kappa_{\partial F}} \log\left( \frac{|\partial E| |\partial F|}{\kappa_{\partial E} \delta} \right)^{2d(1+\alpha) + 1}
    \end{align*}
    up to a constant factor, where $\kappa_{\partial E}$ is the maximal Ahlfors regular boundary constant of $E$, $\alpha$ is some number in $(0, 1/2)$ and $|\partial E|$ is the $(d-1)$-dimensional Hausdorff measure of the boundary $\partial E$.

    There are also more detailed asymptotics on the eigenvalue behavior near the plunge region, see \cite{Kulikov2024} and references therein for an overview of these results.

    Less is known in the case of time-frequency localization operators and this is what we aim to start to address in this paper. The number of eigenvalues close to $1$ was first bounded by Ramanathan and Topiwala in \cite{Ramanathan1994} by showing that
    \begin{align}\label{eq:loc_op_close_to_1}
        \frac{\# \{ k : \lambda_k^\Omega > 1-\delta \}}{|\Omega|}
    \end{align}
    also converges to $1$ as $\Omega$ is dilated. As a byproduct of the proof, one also finds the upper bound
    \begin{align*}
        \big| \# \{k : \lambda_k^\Omega > 1-\delta\} - |\Omega| \big| \leq C |\partial \Omega|
    \end{align*}
    but no equivalence. Similar results are also available for Gabor multipliers, the discrete variant of localization operators, see e.g. \cite{Feichtinger2001, Feichtinger2003}.

    \subsection{Our contribution}
    We will show that in the case of a rotationally invariant symbol, meaning a disk, an annulus or a union of annuli, the eigenvalues of localization operators can be computed explicitly and asymptotically exhibit an $\operatorname{erfc}$ (complementary error function) decay after $|\Omega|$ eigenvalues, over a range proportional to $|\partial \Omega|$. We conjecture that this behavior is universal for all symbols $\Omega$ as long as the window is the standard Gaussian and support the conjecture by verifying it numerically for a diverse collection of sets $\Omega$ with small error.
    
    \section{Eigenvalue behavior}\label{sec:main_loc_op_disk}

    \subsection{Eigenvalues on disks, annuli, and rotationally invariant sets}
    In the original article on time-frequency localization operators \cite{daubechies1988_loc}, a general formula for computing the eigenvalues of localization operators with Gaussian window $g_0(t) = 2^{1/4}e^{-\pi t^2}$ and a rotationally invariant symbol was given. In this special case, the eigenfunctions are Hermite functions whose STFTs are complex monomials if we identify phase space with $\C$. Specialized to the case $\Omega = B(0,R),\, d = 1$ and with our normalization conventions, we get
    \begin{align}\label{eq:disk_eigenvalue_formula}
        \lambda_k^{B(0,R)} = 1 - e^{-\pi R^2} \sum_{j=0}^k \frac{(\pi R^2)^j}{j!}.
    \end{align}
    from \cite[Eq. (19c)]{daubechies1988_loc}. Through a connection with the Poisson distribution, the large $R$ asymptotics of this can be computed neatly.
    \begin{theorem}\label{theorem:loc_op_disk_eig}
        Let $\lambda_k^R$ be the $k$-th eigenvalue of the localization operator $A_{B(0,R)}^{g_0}$. It then holds that
        \begin{align}\label{eq:loc_op_disk_eig_formula}
            \left| \lambda_k^R - \frac{1}{2}\operatorname{erfc}\left( \frac{k-\pi R^2}{\sqrt{2\pi} R} \right) \right| = O\left( \frac{1}{R} \right)
        \end{align}
        where $\operatorname{erfc}$ is the complementary error function.
    \end{theorem}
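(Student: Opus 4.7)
The plan is to recognize the explicit formula \eqref{eq:disk_eigenvalue_formula} as the tail of a Poisson distribution and then invoke a quantitative central limit theorem. Concretely, if $X \sim \operatorname{Poisson}(\pi R^2)$, then
\begin{align*}
    e^{-\pi R^2}\sum_{j=0}^k \frac{(\pi R^2)^j}{j!} = \Pr(X \leq k),
\end{align*}
so the identity \eqref{eq:disk_eigenvalue_formula} can be rewritten as $\lambda_k^R = \Pr(X \geq k+1)$. The target quantity is the standard normal tail, since the identity $1-\Phi(x) = \frac{1}{2}\operatorname{erfc}(x/\sqrt{2})$ gives
\begin{align*}
    \frac{1}{2}\operatorname{erfc}\!\left(\frac{k-\pi R^2}{\sqrt{2\pi}R}\right) = 1-\Phi\!\left(\frac{k-\pi R^2}{\sqrt{\pi R^2}}\right) = \Pr\!\left(Z > \frac{k-\pi R^2}{\sqrt{\pi R^2}}\right)
\end{align*}
for $Z$ standard normal. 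So the theorem reduces to comparing the tail of a $\operatorname{Poisson}(\pi R^2)$ law with that of its normal approximation.

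Next, I would apply a Berry--Esseen bound. When $\pi R^2$ is an integer, $X$ is a sum of that many i.i.d.\ $\operatorname{Poisson}(1)$ variables, and the classical Berry--Esseen theorem yields
\begin{align*}
    \sup_{m\in\Z} \left| \Pr(X \leq m) - \Phi\!\left( \frac{m-\pi R^2}{\sqrt{\pi R^2}} \right) \right| \leq \frac{C}{\sqrt{\pi R^2}} = O(1/R),
\end{align*}
with an absolute constant $C$. For non-integer $\pi R^2$, one uses infinite divisibility to write $X$ as a sum of $n=\lceil \pi R^2\rceil$ i.i.d.\ Poisson variables (with parameter $\pi R^2/n$, uniformly bounded), so the standard Berry--Esseen applies verbatim. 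Replacing the discrete $k+1$ by $k$ in the argument changes the normal tail by at most $\tfrac{1}{\sqrt{2\pi\cdot\pi R^2}}$ by the Lipschitz bound on $\Phi$, which is also $O(1/R)$.

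Combining these steps, we get
\begin{align*}
    \left| \lambda_k^R - \frac{1}{2}\operatorname{erfc}\!\left(\frac{k-\pi R^2}{\sqrt{2\pi}R}\right) \right| &\leq \left| \Pr(X \geq k+1) - \Pr(X \geq k) \right| \\
    &\quad + \left| \Pr(X \geq k) - \Pr\!\left(Z > \tfrac{k-\pi R^2}{\sqrt{\pi R^2}}\right) \right| = O(1/R),
\end{align*}
with the implicit constant uniform in $k$, which is precisely \eqref{eq:loc_op_disk_eig_formula}. The only subtle point is ensuring that the Berry--Esseen constant is uniform in $k$ and independent of $R$; this follows from the standard statement of Berry--Esseen, which gives a supremum over the argument of the distribution function. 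No delicate asymptotics in the extreme tails are needed, since for $|k-\pi R^2| \gg R$ both $\lambda_k^R$ and the erfc expression are exponentially close to $0$ or $1$, and the $O(1/R)$ bound is trivially satisfied there.
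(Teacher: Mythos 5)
Your proposal is correct and follows essentially the same route as the paper: identify \eqref{eq:disk_eigenvalue_formula} as one minus the Poisson CDF with parameter $\pi R^2$, apply a Berry--Esseen bound to get an $O(1/R)$ normal approximation, and convert $1-\Phi$ into $\tfrac{1}{2}\operatorname{erfc}$. Your added care about non-integer $\pi R^2$ (via infinite divisibility) and uniformity in $k$ fills in details the paper leaves implicit, while the intermediate $\Pr(X\geq k+1)$ versus $\Pr(X\geq k)$ step is harmless but unnecessary, since Berry--Esseen applied to $\Pr(X\leq k)$ already gives the bound directly.
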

\begin{proof}
    We recognize \eqref{eq:disk_eigenvalue_formula} as $1$ minus the Poisson cumulative distribution function (CDF) with parameter $\pi R^2$. Specifically, if $X \sim \operatorname{Po}(\pi R^2)$, then
    \begin{align*}
        \lambda_k^R = 1 - \mathbb{P}(X \leq k).
    \end{align*}
    For large $R$, the Poisson distribution $\operatorname{Po}(\pi R^2)$ can be approximated by a normal distribution with mean and variance $\pi R^2$ due to the central limit theorem. Therefore, we can approximate the CDF of $X$ as
    \begin{align*}
        \mathbb{P}(X \leq k) \approx \Phi\left( \frac{k - \pi R^2}{\sqrt{\pi R^2}} \right),
    \end{align*}
    where $\Phi$ is the standard normal CDF.
    
    Substituting this approximation into our expression for $\lambda_k^R$, we obtain
    \begin{align*}
        \lambda_k^R \approx 1 - \Phi\left( \frac{k - \pi R^2}{\sqrt{\pi R^2}} \right).
    \end{align*}
    Now recall that the complementary error function is related to the standard normal $\Phi$ by
    \begin{align*}
        \Phi(z) = \frac{1}{2}\left[1 + \operatorname{erf}\left( \frac{z}{\sqrt{2}} \right)\right] \implies 1 - \Phi(z) = \frac{1}{2} \operatorname{erfc}\left( \frac{z}{\sqrt{2}} \right).
    \end{align*}
    Applying this to our expression for $\lambda_k^R$, we find that
    \begin{align*}
        \lambda_k^R \approx \frac{1}{2} \operatorname{erfc}\left( \frac{k - \pi R^2}{\sqrt{2\pi} R} \right).
    \end{align*}
    To quantify the error in this approximation, we employ the Berry--Esseen theorem, which provides a bound on the difference between the Poisson CDF and its normal approximation. For the Poisson distribution, the Berry--Esseen bound states that
    \begin{align*}
        \left| \mathbb{P}(X \leq k) - \Phi\left( \frac{k - \pi R^2}{\sqrt{\pi R^2}} \right) \right| = O\left( \frac{1}{\sqrt{\pi R^2}} \right) = O\left( \frac{1}{R} \right).
    \end{align*}
    Consequently, the difference between $\lambda_k^R$ and its normal approximation satisfies
    \begin{align*}
        \left| \lambda_k^R - \frac{1}{2} \operatorname{erfc}\left( \frac{k - \pi R^2}{\sqrt{2\pi} R} \right) \right| = O\left( \frac{1}{R} \right)
    \end{align*}
    which is what we wished to show.
\end{proof}
\begin{remark}
    In \cite{daubechies1988_loc} the large $R$ eigenvalue behavior is only investigated for fixed $k$ which just tells us how quickly $\lambda_k^R \to 1$ as $R \to \infty$, not the full $\operatorname{erfc}$ behavior. Still, the eigenvalue formula \eqref{eq:disk_eigenvalue_formula} and the rest of the results of \cite{daubechies1988_loc} are so well-known that Theorem \ref{theorem:loc_op_disk_eig} should perhaps be considered folklore in the field. Still, we have found no reference for it in the literature and so we include a proof in the interest of completeness while making no claim of originality.
\end{remark}
\begin{remark}
    The corresponding eigenvalue formula for $d > 1$ is also available in \cite{daubechies1988_loc} but is dependent on a multiindex $k$. To make computations simpler, we have chosen to restrict ourselves to the $d=1$ case.
\end{remark}

While the above theorem was specialized to the case of a disk centered at $0$, the same eigenvalue behavior can be observed irrespective of the center of the disk. To see this, recall that for disks centered at $0$ it is the Hermite functions $(h_k)_k$ which are the eigenfunctions. Now using that
\begin{align*}
    \big\langle A_{B(0,R)}^{g_0} h_k, h_k \big\rangle = \lambda_k,
\end{align*}
we can see that $\pi(z_0) h_k$ is an eigenfunction with the same eigenvalue for the localization operator $A_{B(z_0, R)}^{g_0}$. Indeed, with the change of variables $w = z - z_0$,
\begin{align*}
    \big\langle A_{B(z_0,R)}^{g_0} (\pi(z_0) h_k), \pi(z_0) h_k\big \rangle &= \int_{B(z_0, R)} V_{g_0}(\pi(z_0) h_k)(z)\langle \pi(z)g_0, \pi(z_0) h_k \rangle \,dz\\
    &=\int_{B(0, R)} \langle \pi(z_0) h_k, \pi(z_0 + w) g_0 \rangle \langle \pi(z_0 + w)g_0, \pi(z_0) h_k \rangle\,dw\\
    &=\int_{B(0, R)} \langle h_k, \pi(w) g_0 \rangle \langle \pi(w)g_0,  h_k \rangle\,dw = \lambda_k
\end{align*}
where we in the second to last step canceled out two phase factors. This means that we have the same $\operatorname{erfc}$ eigenvalue decay no matter where the disk is centered.

In the case where $\Omega$ is an annulus, which we will take to be centered at $0$ in the interest of brevity, we also have an $\operatorname{erfc}$ eigenvalue decay but we will have to work a little harder to show it. For a deeper discussion on localization operators with annuli as symbols, see \cite{Abreu2012}.

Before proceeding with the full proof, we establish two lemmas we will be able to reuse later. Both of them will touch on the topic of non-increasing rearrangements of functions which we refer to \cite[Chapter 3]{Lieb2001} for background on.
\begin{lemma}\label{lemma:reorg_eigs}
    Let $A$ be a compact self-adjoint operator on $L^2(\R)$ and $\{ h_k \}_{k=1}^\infty$ the set of eigenfunctions of $A$. If 
    \begin{align*}
        Ah_k = \mu_k h_k\qquad \text{for all }k = 1, 2, \dots,
    \end{align*}
    and $f \in C^1_b(\R)$ is a function such that
    \begin{align*}
        |f(k) - \mu_k| < \varepsilon \qquad \text{for all }k = 1, 2, \dots,
    \end{align*}
    then
    \begin{align*}
        |\lambda_k - f^*(k)| < \Vert f \Vert_{L^\infty(\R^-)} + \varepsilon + \Vert f'\Vert_{L^\infty(\R^+)}
    \end{align*}
    where $(\lambda_k)_{k=1}^\infty$ are the ordered eigenvalues with multiplicity of $A$ and $f^* : \R^+ \to \R$ is the non-increasing rearrangement of $f$.
\end{lemma}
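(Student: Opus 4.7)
The plan is to decompose the error $|\lambda_k - f^*(k)|$ into three contributions, each matching one of the three terms in the stated bound, and then combine them via the triangle inequality.

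First I would handle the $\varepsilon$ term. Since the map sending a real sequence to its non-increasing rearrangement is $1$-Lipschitz in the $\ell^\infty$ norm, applying it to the sequences $(\mu_k)_{k\geq 1}$ and $(f(k))_{k\geq 1}$, whose supremum distance is less than $\varepsilon$, yields $|\lambda_k - a_k^*| < \varepsilon$ for every $k$, where $a_k^*$ denotes the non-increasing rearrangement of the discrete sequence $(f(k))_{k\geq 1}$.

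Next I would bridge between the discrete rearrangement $a_k^*$ and the continuous rearrangement $(f|_{\R^+})^*$ of $f$ restricted to $\R^+$. To do so, introduce the step function $\tilde{f}(x) = f(\lceil x \rceil)$ on $\R^+$, which takes the value $f(k)$ on $(k-1, k]$; by construction its rearrangement satisfies $\tilde{f}^*(k) = a_k^*$. The mean value theorem gives $\|\tilde{f} - f|_{\R^+}\|_{L^\infty(\R^+)} \leq \|f'\|_{L^\infty(\R^+)}$, and since non-increasing rearrangement is an $L^\infty$-contraction (if $|g - h| \leq \delta$ pointwise then the distribution functions satisfy $d_g(t) \leq d_h(t-\delta)$, which inverts to $|g^* - h^*| \leq \delta$ pointwise), we conclude $|a_k^* - (f|_{\R^+})^*(k)| \leq \|f'\|_{L^\infty(\R^+)}$.

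Finally I would compare $(f|_{\R^+})^*$ with $f^*$, the rearrangement on all of $\R$. Setting $M = \|f\|_{L^\infty(\R^-)}$, for any level $t > M$ the super-level sets $\{f > t\}$ and $\{x > 0 : f(x) > t\}$ coincide, so the two distribution functions (and hence the two rearrangements) agree above the level $M$. The discrepancy for lower levels is bounded by observing that the values of $f$ on $\R^-$ lie in $[-M, M]$, so inserting them into the ranking of $\R^+$ values can shift any given rank by at most $M$; this yields $|(f|_{\R^+})^*(k) - f^*(k)| \leq M$. Combining the three estimates via the triangle inequality proves the claim.

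The main obstacle is the third step: making precise the heuristic that the contribution from $\R^-$ shifts the rearrangement by at most $\|f\|_{L^\infty(\R^-)}$, given that $\R^-$ has infinite measure. The cleanest route is to compare $f$ with the truncation $\bar{f}$ that equals $f$ on $\R^+$ and the constant $-M$ on $\R^-$, observe that $\bar{f}^* = \max(-M, (f|_{\R^+})^*)$, and then apply the $L^\infty$-contraction of rearrangement once more.
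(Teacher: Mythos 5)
Your argument is correct in its main line and is essentially the paper's proof reorganized: both rest on replacing the eigenvalue sequence by a piecewise-constant function on unit intervals, the $L^\infty$-contractivity of the non-increasing rearrangement, and a triangle inequality. You apply the contraction three times (sequence versus sequence, step function versus $f|_{\R^+}$, and $f|_{\R^+}$ versus $f$), whereas the paper applies it once to a single comparison $\Vert \bar{f} - f\Vert_{L^\infty(\R)}$ with $\bar{f}$ equal to $0$ on $\R^-$ and to $\mu_k$ on $(k-1,k]$; the three error terms come out the same either way.

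The one step that does not work as written is your proposed ``cleanest route'' for the third comparison. If $\bar{f}$ is set equal to the constant $-M$ on $\R^-$, then $|f(x)-(-M)|$ can be as large as $2M$ there, so the contraction only yields $2\Vert f\Vert_{L^\infty(\R^-)}$ rather than the coefficient $1$ claimed in the lemma; moreover, under the standard definition of the rearrangement through $|\bar{f}|$, the value $M$ is attained on a set of infinite measure, forcing $\bar{f}^*\geq M$ everywhere and breaking the identity $\bar{f}^*=\max\left(-M,(f|_{\R^+})^*\right)$. Truncating to $0$ on $\R^-$ instead (the paper's choice) fixes both issues, and in fact your direct level-set argument already suffices: for $t>M$ the super-level sets of $f$ and $f|_{\R^+}$ coincide, so the two rearrangements agree whenever either exceeds $M$, and otherwise both lie in $[0,M]$, giving the bound $M$ with no truncation needed. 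With that paragraph replaced or deleted, the proof is complete.
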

\begin{proof}
    Define the function $\Bar{f} : \R \to \R$ as $0$ for $x \leq 0$, $\Bar{f}(k) = \mu_k$, and as constant on all intervals of the form $(k-1, k]$. With $(\mu_k^*)_k$ the non-increasing rearrangement of $(\mu_k)_k$, it then holds that $\lambda_1 = \mu_1^* = \Bar{f}^*(1),\, \lambda_2 = \mu_2^* = \Bar{f}^*(2)$ and similarly for all $k$ since we can effectively sort $\Bar{f}$ just at the integers.
    
    Next up, we compute
    \begin{align*}
        \sup_{x \in \R} \big|\Bar{f}(x) - f(x)\big| &\leq \Vert f \Vert_{L^\infty(\R^-)} + \sup_{k \in \N^+}\sup_{x \in [0,1)} \big|\Bar{f}(k-x) - f(k-x)\big|\\
        &= \Vert f \Vert_{L^\infty(\R^-)} +  \sup_{k \in \N^+} \sup_{x \in [0,1)} \big|\mu_k - f(k-x)\big|\\
        &= \Vert f \Vert_{L^\infty(\R^-)} +  \sup_{k \in \N^+} \sup_{x \in[0,1)}  \big|\mu_k - f(k)\big| + \big|f(k) - f(k-x)\big|\\
        &< \Vert f \Vert_{L^\infty(\R^-)} + \varepsilon + \Vert f' \Vert_{L^\infty(\R^+)}
    \end{align*}
    where we used the Lipschitz property of $f$ in the last step.

    To relate this to $f^*$, we will use the fact that for general functions $g, h$, we have the inequality $\Vert g^* - h^* \Vert_\infty \leq \Vert g-h \Vert_\infty$, see \cite[Chapter 3]{Lieb2001}. This means that we can write
    \begin{align*}
        |\lambda_k - f^*(k)| = |\Bar{f}^*(k) - f^*(k)| \leq \Vert \Bar{f} - f \Vert_{L^\infty(\R)} < \Vert f \Vert_{L^\infty(\R^-)} + \varepsilon + \Vert f' \Vert_{L^\infty(\R^+)}
    \end{align*}
    which is what we wished to show.
\end{proof}

\begin{lemma}\label{lemma:full_erfc_rearrangement}
    Let $a, b, A, B > 0$ be real constants such that $a > b$ and $A > B$, and define
    \begin{align*}
        f(x) = \operatorname{erfc}\left( \frac{x-a}{A} \right) - \operatorname{erfc}\left( \frac{x-b}{B} \right),
    \end{align*}
    then
    \begin{align*}
        \left| f^*(x) - \operatorname{erfc}\left( \frac{x-(a-b)}{A+B} \right) \right| = O\left(\frac{A+B}{a-b}\right).
    \end{align*}
\end{lemma}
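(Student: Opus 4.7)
The plan is to approximate $f$ by a simpler ``decoupled'' function $\tilde f$ whose non-increasing rearrangement can be computed in closed form, and then transfer the estimate via the $L^\infty$-contractivity of the rearrangement \cite[Chapter 3]{Lieb2001}. Write $F(s) = \operatorname{erfc}((s-(a-b))/(A+B))$ for the target approximation.

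First, I would use the reflection identity $\operatorname{erfc}(-y) = 2 - \operatorname{erfc}(y)$ to rewrite
\[
    f(x) = \operatorname{erfc}\left(\frac{x-a}{A}\right) + \operatorname{erfc}\left(\frac{b-x}{B}\right) - 2,
\]
which exposes both contributions as rising erfc transitions, one from each side. This motivates introducing
\[
    \tilde f(x) = \begin{cases} \operatorname{erfc}((b-x)/B), & x \leq (a+b)/2, \\ \operatorname{erfc}((x-a)/A), & x > (a+b)/2, \end{cases}
\]
obtained by keeping only the dominant erfc on each half. A direct computation gives the pointwise bound $0 \leq \tilde f(x) - f(x) \leq \operatorname{erfc}((a-b)/(2A))$, since on the left half $\tilde f - f = \operatorname{erfc}((a-x)/A)$ and on the right half $\tilde f - f = \operatorname{erfc}((x-b)/B)$, both maximized at the midpoint $(a+b)/2$.

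Because $\tilde f$ is strictly monotonic on each half, its superlevel sets $\{\tilde f > t\}$ can be written down exactly. For $t \in (0, \operatorname{erfc}(-(a-b)/(2A)))$ both halves contribute and the two pieces join at the midpoint into a single interval of length precisely $(a-b) + (A+B)\operatorname{erfc}^{-1}(t)$; inverting this distribution function yields $\tilde f^*(s) = F(s)$ exactly for $s \geq (a-b)(A-B)/(2A)$. In the complementary small-$s$ regime only the left half contributes and $\tilde f^*(s) = \operatorname{erfc}((s - (a-b)/2)/B)$; direct evaluation shows that both $\tilde f^*(s)$ and $F(s)$ are then trapped within $\operatorname{erfc}((a-b)/(2A))$ of $2$, and hence within that distance of each other.

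Combining via the triangle inequality and contractivity gives $\|f^* - F\|_\infty \leq \|f^* - \tilde f^*\|_\infty + \|\tilde f^* - F\|_\infty \leq 2\operatorname{erfc}((a-b)/(2A))$. The elementary tail bound $\operatorname{erfc}(y) \leq 1/(y\sqrt{\pi})$ for $y \geq 1$, together with the trivial $\operatorname{erfc} \leq 1$ otherwise, then converts this into the claimed $O((A+B)/(a-b))$. The main subtle point is the case analysis in the small-$s$ regime where the closed form for $\tilde f^*$ breaks down, but it is resolved simply by noting that both functions stay uniformly close to $2$ there.
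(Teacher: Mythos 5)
Your proposal is correct, and it follows the same skeleton as the paper's proof: replace $f$ by an auxiliary function $\tilde{f}$ within $O\big(\tfrac{A+B}{a-b}\big)$ in $L^\infty$, transfer via the contractivity $\Vert g^* - h^*\Vert_\infty \leq \Vert g - h\Vert_\infty$, and compute $\tilde{f}^*$ from the measure of superlevel sets, which in both arguments comes out to $(a-b) + (A+B)\operatorname{erfc}^{-1}(t)$. The difference lies in the choice of $\tilde{f}$: the paper symmetrizes $f$ by reflecting the branch parametrized by $x_2(t) = b - Bt$ onto the branch $x_1(t) = a + At$, whereas you rewrite $f(x) = \operatorname{erfc}\big(\tfrac{x-a}{A}\big) + \operatorname{erfc}\big(\tfrac{b-x}{B}\big) - 2$ and truncate to the dominant transition on each side of the midpoint $\tfrac{a+b}{2}$. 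Your choice buys a cleaner endgame, since $\tilde{f}^*(s)$ equals the target $\operatorname{erfc}\big(\tfrac{s-(a-b)}{A+B}\big)$ \emph{exactly} for $s \geq \tfrac{(a-b)(A-B)}{2A}$, so there is no residual $\operatorname{erfc}$ correction term to estimate as in the paper; the price is the extra case analysis for small $s$, which you resolve correctly by noting both functions are within $\operatorname{erfc}\big(\tfrac{a-b}{2A}\big)$ of $2$ there. Two small points: on the right half the deficit $\operatorname{erfc}\big(\tfrac{x-b}{B}\big)$ is actually bounded by $\operatorname{erfc}\big(\tfrac{a-b}{2B}\big)$, which is \emph{smaller} than your stated uniform bound $\operatorname{erfc}\big(\tfrac{a-b}{2A}\big)$ since $A > B$, so the claim stands; and like the paper, you implicitly treat $f$ as nonnegative even though it is (exponentially) slightly negative far to the left, which is immaterial for the $L^\infty$ estimates.
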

\begin{proof}
    Define the functions $x_1$ and $x_2$ by
    \begin{align*}
        x_1(t) = a + At,\qquad x_2(t) = b - Bt
    \end{align*}
    and note that they intersect when
    \begin{align*}
        x_1(t_0) = x_2(t_0) \implies t_0 = \frac{-(a-b)}{A+B}.
    \end{align*}
    In this way, we can divide up $\R$ as
    \begin{align*}
        \R = \big(-\infty, x_2(t_0)\big] \cup \big[x_1(t_0), \infty\big).
    \end{align*}
    We will show that, up to a small error, we can consider $f$ on only one of the two parts of $\R$. Specifically, with the symmetrization
    \begin{align*}
        \Tilde{f}(x) = \begin{cases}
            f(x)\qquad &x \in x_1([t_0, \infty)),\\
            f(x_1(x_2^{-1}(x)))\qquad &x \in x_2([t_0, \infty))
        \end{cases}
    \end{align*}
    we will bound $\Vert f-\Tilde{f}\Vert_\infty$. Note first that since these two functions agree for $x \in x_1([t_0, \infty))$, it suffices to compute the maximum error on $x_2([t_0, \infty))$. We have that
    \begin{align*}
        \big\Vert \Tilde{f} - f \big\Vert_\infty &= \sup_{t \geq t_0} |\Tilde{f}(x_2(t)) - f(x_2(t))| = \sup_{t \geq t_0} |f(x_1(t)) - f(x_2(t))| \\
        &= \sup_{t \geq t_0} \left| \operatorname{erfc}\left(\frac{x_1(t)-a}{A}\right) - \operatorname{erfc}\left(\frac{x_1(t)-b}{B}\right) -\operatorname{erfc}\left(\frac{x_2(t)-a}{A}\right) + \operatorname{erfc}\left(\frac{x_2(t)-b}{B}\right) \right|\\
        &= \sup_{t \geq t_0} \left| 2 - \operatorname{erfc}\left(\frac{x_1(t)-b}{B}\right) - \operatorname{erfc}\left(\frac{x_2(t)-a}{A}\right) \right|\\
        &= \sup_{t \geq t_0} \left| 2 - \operatorname{erfc}\left(\frac{At + (a-b)}{B}\right) - \operatorname{erfc}\left(\frac{-Bt - (a-b)}{A}\right) \right|
    \end{align*}
    where we used that $\operatorname{erfc}(t) + \operatorname{erfc}(-t) = 2$ in the second to last step. Since $\operatorname{erfc}$ is a decreasing function, we can bound the first $\operatorname{erfc}$ term by plugging in $t = t_0 = \frac{-(a-b)}{A+B}$ and computing
    \begin{equation}\label{eq:1RBound_erfc_new}
    \begin{aligned}
         \operatorname{erfc}\left( \frac{At + (a-b)}{B} \right) &\leq \operatorname{erfc}\left( \frac{A\frac{-(a-b)}{A+B} + (a-b)}{B} \right)\\
         &= \operatorname{erfc}\left( \frac{a-b}{A+B} \right) \leq\frac{A+B}{a-b}
    \end{aligned}
    \end{equation}
    where we used that $\operatorname{erfc}(x) \leq \frac{1}{x}$ for $x > 0$. Now $2 - \operatorname{erfc}(x)$ is an increasing function, so $2 - \operatorname{erfc}\left(\frac{-Bt - (a-b)}{A}\right)$ can be bounded from above by plugging in $t = t_0$ since the argument is decreasing in $t$. Doing so, we find
    \begin{align*}
        2-\operatorname{erfc}\left( \frac{-Bt - (a-b)}{A} \right) &\leq 2-\operatorname{erfc}\left( \frac{-Bt_0 - (a-b)}{A} \right)\\
        &= 2 - \operatorname{erfc}\left( \frac{-(a-b)}{A+B} \right) = \operatorname{erfc}\left( \frac{a-b}{A+B} \right) \leq \frac{A+B}{a-b}
    \end{align*}
    by the same inequality.

    Having established $\Vert f - \Tilde{f} \Vert_\infty$ is small, it also follows that the two rearrangements $f^*$ and $\Tilde{f}^*$ satisfy
    \begin{align*}
        \big\Vert f^* - \Tilde{f}^* \big\Vert_\infty \leq \big\Vert f - \Tilde{f} \big\Vert_\infty       
    \end{align*}
    by the inequality from \cite[Chapter 3]{Lieb2001}. 

    To compute $\Tilde{f}^*$, note that $\Tilde{f}^*$ is the unique function such that
    \begin{align}\label{eq:lambda_def_of_ftilde}
        |\{ x > 0 : \Tilde{f}^*(x) > \gamma \}| = |\{ x \in \R : \Tilde{f}(x) > \gamma \}|
    \end{align}
    for all $\gamma \in \R$. In particular, we can restrict ourselves to $(0, f(x_1(t_0))]$ since this is the range of $\Tilde{f}$. Now fix such a $\gamma$ and note that there exists a unique $t_\gamma \geq t_0$ such that
    \begin{align}\label{eq:gamma_thingy}
        \gamma = \Tilde{f}(x_1(t_\gamma)) = \operatorname{erfc}(t_\gamma) - \operatorname{erfc}\left( \frac{At_\gamma + (a-b)}{B} \right)
    \end{align}
    and that $\Tilde{f}(x_1(t)) = \Tilde{f}(x_2(t))$. From the symmetry of $\Tilde{f}$, \eqref{eq:lambda_def_of_ftilde} and that $x_1(t) \geq x_2(t)$ for $t \geq t_0$, it follows that
    \begin{align*}
        \big|\big\{ x \in \R : \Tilde{f}(x) > \gamma \big\}\big| = x_1(t_\gamma) - x_2(t_\gamma) = (a-b) + (A+B) t_\gamma,
    \end{align*}
    and consequently,
    \begin{align}\label{eq:f_s}
        \Tilde{f}^*(x) = \gamma \iff x = (a-b) + (A+B)t_\gamma.
    \end{align}
    Rearranging, we can write $t_\gamma = \frac{x - (a -b)}{A+B}$ and plugging this into \eqref{eq:gamma_thingy} and \eqref{eq:f_s}, we get
    \begin{align*}
        \Tilde{f}^*(x) = \operatorname{erfc}\left( \frac{x - (a-b)}{A+B} \right) - \operatorname{erfc}\left( \frac{A \frac{x - (a-b)}{A+B} + (a-b)}{B} \right).
    \end{align*}
    We claim that the second term is $O\big( \frac{A+B}{a-b} \big)$. Indeed, since $\operatorname{erfc}$ is decreasing in its argument, the second term is at its largest for $x = 0$. In that case, we can write the argument as
    \begin{align*}
        \frac{A \frac{-(a-b)}{A+B} + (a-b)}{B} = \frac{(a-b)}{B}\left( \frac{-A}{A+B} + \underbrace{\frac{A+B}{A+B}}_{=1} \right) = \frac{a-b}{A+B}
    \end{align*}
    and it follows that 
    \begin{align*}
        \left|\Tilde{f}^*(x) - \operatorname{erfc}\left(\frac{x - (a-b)}{A+B}\right)\right| = O\left(\frac{A+B}{a-b}\right)
    \end{align*}
    which is what we wished to show.
\end{proof}

The full proof now follows without too much work.

\begin{proposition}\label{prop:loc_op_annulus_eig}
    Let $\lambda_k^R$ be the $k$-th eigenvalue of the localization operator $A_{B(0, R) \setminus B(0, rR)}^{g_0}$ for a fixed $r < 1$. It then holds that
    \begin{align}\label{eq:loc_op_ann_eig_formula}
        \left| \lambda_k^R - \frac{1}{2}\operatorname{erfc}\left( \frac{k-\pi R^2(1-r^2)}{\sqrt{2\pi} R(1+r)} \right) \right| = O\left( \frac{1}{R} \right).
    \end{align}
\end{proposition}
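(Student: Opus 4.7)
The plan is to exploit the fact that Hermite functions simultaneously diagonalize both $A_{B(0,R)}^{g_0}$ and $A_{B(0,rR)}^{g_0}$, and that by linearity of the localization operator in its symbol,
\[
    A_{B(0,R)\setminus B(0,rR)}^{g_0} = A_{B(0,R)}^{g_0} - A_{B(0,rR)}^{g_0}.
\]
Thus the $k$-th Hermite function is an eigenfunction of the annulus operator with eigenvalue $\mu_k = \lambda_k^{B(0,R)} - \lambda_k^{B(0,rR)}$, which by two applications of Theorem~\ref{theorem:loc_op_disk_eig} satisfies $|\mu_k - f(k)| = O(1/R)$, where
\[
    f(x) = \tfrac{1}{2}\operatorname{erfc}\!\left(\tfrac{x-\pi R^2}{\sqrt{2\pi}\,R}\right) - \tfrac{1}{2}\operatorname{erfc}\!\left(\tfrac{x - \pi r^2 R^2}{\sqrt{2\pi}\,rR}\right).
\]

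Since the sequence $(\mu_k)_k$ is not monotone (it is hump-shaped between the two plunge regions), the ordered eigenvalues $(\lambda_k^R)_k$ are the non-increasing rearrangement of $(\mu_k)_k$. To pass from $\mu_k$ to $\lambda_k^R$, I would invoke Lemma~\ref{lemma:reorg_eigs} with the smooth interpolant $f$ above. This requires checking that $\|f\|_{L^\infty(\R^-)}$ and $\|f'\|_{L^\infty(\R^+)}$ are both $O(1/R)$: the former because for $x \leq 0$ both arguments of $\operatorname{erfc}$ are negative of order $R$, so each term equals $1 - O(e^{-cR^2})$ and the difference is exponentially small; the latter from the explicit derivative, which is a combination of two Gaussians scaled by a factor of order $1/R$. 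Lemma~\ref{lemma:reorg_eigs} then yields $|\lambda_k^R - f^*(k)| = O(1/R)$.

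Finally, to compute $f^*$, I would use that rearrangement commutes with positive scalar multiplication, so $f^* = \tfrac{1}{2} g^*$ where
\[
    g(x) = \operatorname{erfc}\!\left(\tfrac{x-a}{A}\right) - \operatorname{erfc}\!\left(\tfrac{x-b}{B}\right)
\]
with $a = \pi R^2$, $A = \sqrt{2\pi}\,R$, $b = \pi r^2 R^2$, $B = \sqrt{2\pi}\,rR$. Lemma~\ref{lemma:full_erfc_rearrangement} then delivers
\[
    \left|f^*(x) - \tfrac{1}{2}\operatorname{erfc}\!\left(\tfrac{x - \pi R^2(1-r^2)}{\sqrt{2\pi}\,R(1+r)}\right)\right| = O\!\left(\tfrac{A+B}{a-b}\right) = O(1/R),
\]
using $a-b = \pi R^2(1-r^2)$ and $A+B = \sqrt{2\pi}\,R(1+r)$. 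Composing this with the previous bound via the triangle inequality finishes the argument.

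The main obstacle here is essentially bookkeeping: carefully verifying that the three small quantities fed into Lemma~\ref{lemma:reorg_eigs} (the Theorem~\ref{theorem:loc_op_disk_eig} error $\varepsilon$, the left-tail bound on $f$, and the Lipschitz constant of $f$ on $\R^+$) all sit within a common $O(1/R)$ bound. Since the two lemmas are tailored to precisely this kind of rearrangement argument, once the Hermite diagonalization is identified the rest is a clean composition.
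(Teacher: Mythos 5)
Your proposal is correct and follows essentially the same route as the paper: identify the Hermite functions as simultaneous eigenfunctions, write the unordered eigenvalues as differences of disk eigenvalues, pass to the ordered spectrum via Lemma \ref{lemma:reorg_eigs}, and compute the rearrangement with Lemma \ref{lemma:full_erfc_rearrangement} using the same constants $a,b,A,B$. The only cosmetic difference is your exponential bound on the left tail of $f$ where the paper uses $\operatorname{erfc}(x)\leq 1/x$; both suffice for the $O(1/R)$ estimate.
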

\begin{proof}
    In this situation, the eigenfunctions of $A_{B(0,R) \setminus B(0,rR)}^{g_0}$ are still the Hermite functions, and the unordered eigenvalues can be written as
    \begin{align*}
        \lambda_k^{B(0, R) \setminus B(0, rR)} = \lambda_k^{B(0,R)} - \lambda_k^{B(0,rR)} =: \mu_k
    \end{align*}
    where $\lambda_k^{B(0,R)}$ are the eigenvalues from Theorem \ref{theorem:loc_op_disk_eig}. As $R \to \infty$, this quantity will converge to
    \begin{align*}
        f(x) = \frac{1}{2}\left[ \operatorname{erfc}\left( \frac{x - \pi R^2}{\sqrt{2\pi} R} \right) - \operatorname{erfc}\left( \frac{x - \pi (rR)^2}{\sqrt{2\pi} rR} \right) \right]
    \end{align*}
    evaluated at $x = k$, with error bounded by $O(\frac{1}{R})$. Lemma \ref{lemma:reorg_eigs} therefore applies and we can conclude that
    \begin{align*}
        \big|\lambda_k^R - f^*(k)\big| = \Vert f \Vert_{L^\infty(\R^-)} + \Vert f' \Vert_{L^\infty(\R^+)} + O\left( \frac{1}{R} \right).
    \end{align*}
    By bounding
    \begin{align*}
        \Vert f \Vert_{L^\infty(\R^-)} &\leq |f(0)| = \frac{1}{2}\left| \operatorname{erfc}\left( \frac{- \pi R^2}{\sqrt{2\pi} R} \right) - \operatorname{erfc}\left( \frac{- \pi (rR)^2}{\sqrt{2\pi} rR} \right) \right|\\
        &\leq \frac{1}{2}\left| \operatorname{erfc}\left( \frac{\pi R^2}{\sqrt{2\pi}R} \right)\right| + \frac{1}{2}\left| \operatorname{erfc}\left( \frac{\pi (rR)^2}{\sqrt{2\pi}rR} \right)\right| = O\left(\frac{1}{R}\right)
    \end{align*}
    using $\operatorname{erfc}(-x) = 2 - \operatorname{erfc}(x)$ and $\operatorname{erfc}(x) \leq \frac{1}{x}$ for $x > 0$, and
    \begin{align*}
        |f'(x)| \leq \left| \frac{\exp\left( \frac{-(x-\pi R^2)^2}{2 \pi R^2} \right)}{\sqrt{2} \pi R} \right| + \left| \frac{\exp\left( \frac{-(x-\pi (rR)^2)^2}{2 \pi (rR)^2} \right)}{\sqrt{2} \pi rR} \right| = O\left( \frac{1}{R} \right),
    \end{align*}
    we see that $|\lambda_k^R - f^*(k)| = O(\frac{1}{R})$. To evaluate $f^*(k)$, we apply Lemma \ref{lemma:full_erfc_rearrangement} with $a = \pi R^2$, $b = \pi (rR)^2$, $A = \sqrt{2\pi} R$ and $B = \sqrt{2\pi} rR$ to get that
    \begin{align*}
        \left|f^*(k) - \frac{1}{2}\operatorname{erfc}\left( \frac{k-\pi R^2(1-r^2)}{\sqrt{2\pi} R(1+r)} \right)\right| = O\left( \frac{1}{R} \right).
    \end{align*}
    We can therefore conclude that
    \begin{align*}
        \left| \lambda_k^R - \frac{1}{2}\operatorname{erfc}\left( \frac{k-\pi R^2(1-r^2)}{\sqrt{2\pi} R(1+r)} \right) \right| = O\left( \frac{1}{R} \right),
    \end{align*}
    which is precisely \eqref{eq:loc_op_ann_eig_formula}, finishing the proof.
\end{proof}

The rearrangement from Lemma \ref{lemma:full_erfc_rearrangement} applied in Proposition \ref{prop:loc_op_annulus_eig} is implemented numerically in Figure \ref{fig:sort_verification}.

\begin{figure}[H]
    \centering
    \includegraphics[width=\linewidth]{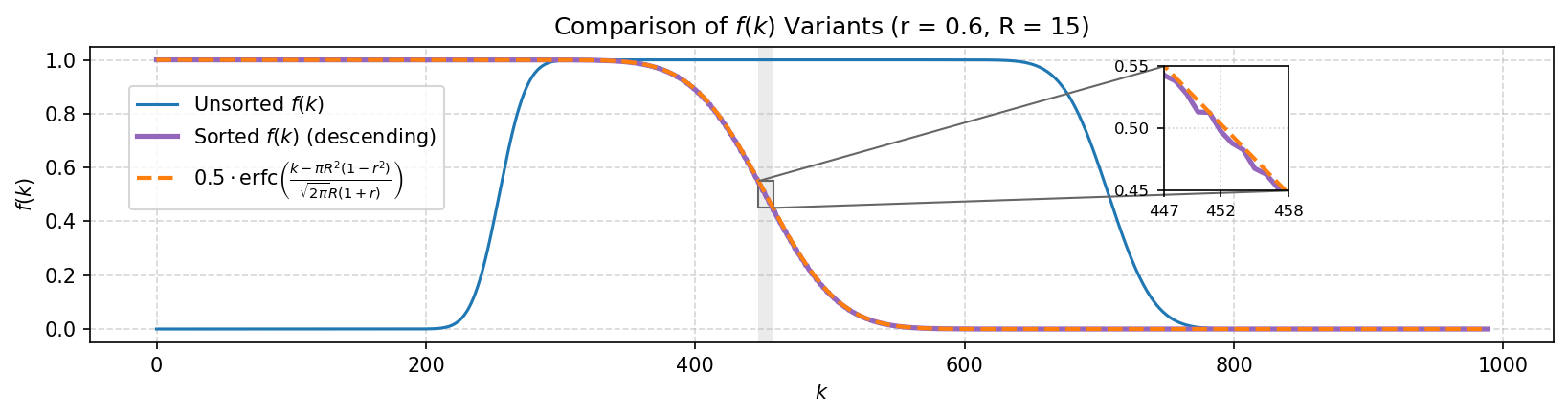}
    \caption{Numerical verification of Proposition \ref{prop:loc_op_annulus_eig} with $R = 15$, $r = 0.6$ comparing a manual sorting of samples of $f$ with the proposed $\Tilde{f}^*$.}
    \label{fig:sort_verification}
\end{figure}
The same translation argument we mentioned for disks applies to annuli since the proof is only based on the collection of eigenvalues $(\lambda_k^R)_k$.

Our final generalization of this result is a lifting to finite unions of annuli. To formulate this in a manner susceptible to generalization, note that all rotationally invariant $\Omega$ can be written as
\begin{align*}
    \Omega = \{ z \in \R^2 : |z| \in A \}
\end{align*}
for some $A \subset \R^+$. To rule out the cases where $A$ has an isolated point which changes $\partial \Omega$ but not the localization operator, we can assume that $\Omega$ is \emph{regular closed}, i.e., $\Omega = \overline{\operatorname{int}(\Omega)}$. For technical reasons, in the proof below we will need to assume that the number of annuli is finite so that the distance between two annuli is bounded away from $0$ which we formulate as $\Omega$ having a finite number of connected components. Following the standard setup which can be found in e.g., \cite{Abreu2015}, we define the dilation $R\Omega$ of $\Omega$ as
\begin{align*}
    R\Omega = \{ z \in \R^{2} : z/R \in \Omega \ \}.
\end{align*}

\begin{proposition}\label{prop:loc_op_rot_inv_eig}
    Let $\Omega \subset \R^2$ be a compact, regular closed and rotationally invariant set with a finite number of connected components, and let $\lambda_k^{R\Omega}$ be the $k$-th eigenvalue of the localization operator $A_{R\Omega}^{g_0}$. It then holds that
    \begin{align}\label{eq:loc_op_rot_inv_eig_formula}
        \left| \lambda_k^{R\Omega} - \frac{1}{2}\operatorname{erfc}\left( \sqrt{2\pi}\frac{k-R^2|\Omega|}{R|\partial \Omega|} \right) \right| = O\left( \frac{1}{R} \right).
    \end{align}
\end{proposition}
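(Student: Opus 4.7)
The plan is to reduce the proposition to the annulus case of Proposition \ref{prop:loc_op_annulus_eig} by decomposing $\Omega$ into its rotationally invariant connected components. Since $\Omega$ is regular closed, rotationally invariant, and has finitely many components, one can write $\Omega = \bigsqcup_{i=1}^N A_i$ with each $A_i = \{z \in \R^2 : a_i \leq |z| \leq b_i\}$ a disk or annulus and $0 \leq a_1 < b_1 < a_2 < b_2 < \cdots < a_N < b_N$. The finiteness assumption ensures that the radial gaps $a_{i+1}-b_i$ are bounded below by a positive constant, which under dilation becomes a $\Theta(R^2)$ separation in the $k$-variable between successive erfc bumps.

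Because $\chi_{R\Omega} = \sum_i \chi_{RA_i}$ and the localization operator is linear in its symbol, $A_{R\Omega}^{g_0} = \sum_i A_{RA_i}^{g_0}$. Every summand is diagonal in the Hermite basis by rotational invariance, hence so is the sum, and the unordered Hermite eigenvalues satisfy $\mu_k = \sum_{i=1}^N \mu_k^{(i)}$. Applying Theorem \ref{theorem:loc_op_disk_eig} to each annulus exactly as in the proof of Proposition \ref{prop:loc_op_annulus_eig} (writing the annulus eigenvalue as a difference of two disk eigenvalues) gives $\mu_k = f(k) + O(1/R)$ with
\begin{align*}
    f(x) = \sum_{i=1}^N \frac{1}{2}\left[\operatorname{erfc}\left(\frac{x - \pi (b_i R)^2}{\sqrt{2\pi}\, b_i R}\right) - \operatorname{erfc}\left(\frac{x - \pi (a_i R)^2}{\sqrt{2\pi}\, a_i R}\right)\right].
\end{align*}
Summing the per-annulus estimates from Proposition \ref{prop:loc_op_annulus_eig} (with $N$ independent of $R$) yields $\Vert f \Vert_{L^\infty(\R^-)}, \Vert f' \Vert_{L^\infty(\R^+)} = O(1/R)$, and Lemma \ref{lemma:reorg_eigs} then delivers $\lambda_k^{R\Omega} = f^*(k) + O(1/R)$.

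The main step, and the main obstacle, is evaluating $f^*$ for a sum of $N$ bumps rather than a single bump as in Lemma \ref{lemma:full_erfc_rearrangement}. I plan to work with the distribution function $\mu_f(t) = |\{x : f(x) > t\}|$ directly: each bump $f_i$ has transition width $O(R)$ while consecutive bumps are $\Theta(R^2)$ apart in $k$-space, so the Gaussian tails of one bump are exponentially small on the effective support of another, giving $\mu_f(t) = \sum_{i=1}^N \mu_{f_i}(t) + O(e^{-cR^2})$ uniformly in $t \in (0,1)$. The single-bump computation embedded in the proof of Lemma \ref{lemma:full_erfc_rearrangement}, with parameters $\alpha_i = \pi(b_iR)^2,\ \beta_i=\pi(a_iR)^2,\ A_i = \sqrt{2\pi}\,b_iR,\ B_i=\sqrt{2\pi}\,a_iR$, yields $\mu_{f_i}(t) = \pi R^2(b_i^2-a_i^2) + \sqrt{2\pi}\,R(a_i+b_i)\operatorname{erfc}^{-1}(2t) + O(1/R)$. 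Summing and using $|\Omega| = \pi\sum_i(b_i^2-a_i^2)$ and $|\partial\Omega| = 2\pi\sum_i(a_i+b_i)$, I get $\mu_f(t) = R^2|\Omega| + \tfrac{R|\partial\Omega|}{\sqrt{2\pi}}\operatorname{erfc}^{-1}(2t) + O(1/R)$, and inverting gives $f^*(x) = \tfrac{1}{2}\operatorname{erfc}(\sqrt{2\pi}(x - R^2|\Omega|)/(R|\partial\Omega|)) + O(1/R)$, which combined with the previous step is exactly \eqref{eq:loc_op_rot_inv_eig_formula}. The delicate part will be controlling the tail estimate uniformly on $(0,1)$ and keeping the implicit constants stable when two annuli are close together or very unequal in size; here the finiteness of $N$ and the fixed positive radial gaps are essential.
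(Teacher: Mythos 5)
Your overall architecture is the same as the paper's: decompose $R\Omega$ into finitely many annuli, use that each $A_{R A_i}^{g_0}$ is diagonal in the Hermite basis so the unordered eigenvalues add, invoke Lemma \ref{lemma:reorg_eigs} to reduce everything to computing $f^*$ for a sum of well-separated erfc bumps, and then compute $f^*$ via level-set measures. The first half of your argument matches the paper's proof essentially line for line. The gap is in the rearrangement step, exactly where you flagged the delicacy, and as stated it is a real one: the claim $\mu_f(t)=\sum_i\mu_{f_i}(t)+O(e^{-cR^2})$ cannot hold uniformly on $(0,1)$. A sup-norm perturbation of size $\delta$ in the function becomes a perturbation of size roughly $\delta/|f_i'|$ in the level-set measure, and $|f_i'|$ is not bounded below near the top and bottom of each bump. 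Concretely, at the midpoint $x_0$ of a gap between consecutive annuli, for $t$ between $\max(f_n(x_0),f_{n+1}(x_0))$ and their sum, points of the gap lie in $\{f>t\}$ but in no $\{f_i>t\}$, giving an $O(1)$ rather than exponentially small discrepancy; worse, for $t$ within $e^{-cR^2}$ of $1$ one has $\mu_{f_i}(t)=0$ while the formula $|\Omega_i|+\frac{|\partial\Omega_i|}{\sqrt{2\pi}}\operatorname{erfc}^{-1}(2t)$ is of size $\Theta(R^2)$, because $(\operatorname{erfc}^{-1})'$ blows up at the endpoints. For the same reason the per-bump statement $\mu_{f_i}(t)=|\Omega_i|+\frac{|\partial\Omega_i|}{\sqrt{2\pi}}\operatorname{erfc}^{-1}(2t)+O(1/R)$ is false as a uniform-in-$t$ claim, and ``inverting'' a distribution-function estimate whose error is not uniform does not directly yield the sup-norm bound on $f^*$ that the proposition requires.

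These failures occur only at heights where $f^*$ and the target $\operatorname{erfc}$ profile are themselves within $O(1/R)$ of $0$ or of their maximum, so your strategy is repairable, but the repair is the substance of the second half of the paper's proof. There, one first replaces each $f_n$ by $\Tilde{f}_n=\chi_{E_n}f_n$ with pairwise disjoint $E_n$ and $\Vert f_n-\Tilde{f}_n\Vert_\infty=O(1/R)$, so that level-set additivity is exact and no tail bookkeeping is needed; and, crucially, the uniform additive error on $\mu$ is replaced by a two-sided sandwich in the level: from $\Vert\Tilde{f}_n^*-g_n\Vert_\infty\le\varepsilon=O(1/R)$ one gets $|\{g_n>\gamma+\varepsilon\}|\le|\{\Tilde{f}_n>\gamma\}|\le|\{g_n>\gamma-\varepsilon\}|$, sums over $n$, and then applies $\tfrac12\operatorname{erfc}$ to the resulting bounds on $(\Tilde{f}^*)^{-1}(\gamma)$. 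Since $\operatorname{erfc}\circ\operatorname{erfc}^{-1}$ is the identity, this returns $\gamma\pm\varepsilon$ exactly and the endpoint blow-up of $\operatorname{erfc}^{-1}$ never enters. You should either adopt this sandwich, or restrict your uniform distribution-function estimates to, say, $t\in[1/R,\,1-1/R]$ and dispose of the complementary range of $x$ by the trivial bound $|f^*(x)-\tfrac12\operatorname{erfc}(\cdot)|\le O(1/R)$ there.
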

\begin{proof}
    Write $R\Omega = \cup_{n=1}^N \Omega_n$ for which $R^2|\Omega| = \sum_{n=1}^N |\Omega_n|$ and $R|\partial \Omega| = \sum_{n=1}^N |\partial \Omega_n|$. If we let $r_\textrm{i}^n$ and $r_\textrm{o}^n$ denote the inner and outer radii of $\Omega_n$, since the same Hermite functions are the eigenfunctions of $A_{R\Omega}^{g_0}$, we can write the unordered eigenvalues as
    \begin{align*}
        \mu_k = \sum_{n=1}^N \lambda_k^{r_\textrm{o}^n} - \lambda_k^{r_\textrm{i}^n}
    \end{align*}
    where $\lambda_k^r$ is $k$-th eigenvalue of $A_{B(0,r)}^{g_0}$. Writing $\mu_k^*$ for the decreasing rearrangement of this collection, it follows that $\lambda_k^{R\Omega} = \mu_k^*$. If we let $f$ be the function
    \begin{align}\label{eq:f_n_def}
        f(x) = \sum_{n=1}^N \frac{1}{2}\left( \operatorname{erfc}\left( \frac{x-\pi (r_\textrm{o}^n)^2}{\sqrt{2\pi} r_\textrm{o}^n}\right) - \operatorname{erfc}\left( \frac{x-\pi (r_\textrm{i}^n)^2}{\sqrt{2\pi} r_\textrm{i}^n}\right) \right) = \sum_{n=1}^N f_n(x),
    \end{align}
    the unordered $\mu_k$ are (asymptotically) samples of $f$ at the integers with $O(\frac{1}{R})$ error. Just as in Proposition \ref{prop:loc_op_annulus_eig}, we can apply Lemma \ref{lemma:reorg_eigs} and conclude that
    \begin{align}\label{eq:main_prop_lambda_rel}
        |\lambda_k^{R\Omega} - f^*(k)| = O\left( \frac{1}{R} \right)
    \end{align}
    using the same argument for bounding $f$ on $\R^-$ and the derivative as $O(\frac{1}{R})$. We will devote the rest of the proof to show that $f^*(k)$ can be written as in \eqref{eq:loc_op_rot_inv_eig_formula}, with error less than $O(\frac{1}{R})$. 
    
    We will need for the functions $(f_n)_n$ from \eqref{eq:f_n_def} to have disjoint supports, and to that end set out to construct compactly supported functions $\Tilde{f}_n$. For each $n \geq 1$, define the expanded radii $\Tilde{r}_\textrm{i}^n$ and $\Tilde{r}_\textrm{o}^n$ and the set $E_n$ by
    \begin{align*}
        \Tilde{r}_\textrm{i}^n = \frac{r_\textrm{i}^n + r_\textrm{o}^{n-1}}{2},\qquad \Tilde{r}_\textrm{o}^n = \frac{r_\textrm{o}^n + r_\textrm{i}^{n+1}}{2},\qquad  E_n = \big[ \pi (\Tilde{r}_\textrm{i}^n)^2,\, \pi(\Tilde{r}_\textrm{o}^n)^2 \big]
    \end{align*}
    where we set $r_\textrm{o}^0 = 0$ and $r_\textrm{i}^{N+1} = 2r_\textrm{o}^N$ to treat the edge cases. If we define the compactly supported functions $\Tilde{f}_n(x) = \chi_{E_n}(x) f_n(x)$, we claim that it holds that
    \begin{align}\label{eq:f_f_tilde_diff}
        \big\Vert f_n - \Tilde{f}_n \big\Vert_\infty = O\left( \frac{1}{R} \right).
    \end{align}
    To see this, first note that since $R\Omega = \cup_{n=1}^N \Omega_n$, the inner and outer radii are linear in $R$. Consequently, for an $x$ such that $\pi R^2 x^2 < \pi (\Tilde{r}_\textrm{i}^n)^2$ it holds that $\pi R^2 x^2 - \pi (r_\textrm{i}^n)^2 = c_1 R^2$ and $\pi R^2 x^2 - \pi (r_\textrm{o}^n)^2 = c_2 R^2$ for some $c_1, c_2 < 0$ since $\Tilde{r}_\textrm{i}^n < r_\textrm{i}^n < r_\textrm{o}^n$. For these $x$, we can therefore write
    \begin{align*}
        |f_n(\pi R^2 x^2) - \Tilde{f}_n(\pi R^2 x^2)| &= |f_n(\pi R^2 x^2)|\\
        &= \frac{1}{2}\left| \operatorname{erfc}\left( \frac{\pi R^2 x^2-\pi (r_\textrm{o}^n)^2}{\sqrt{2\pi} r_\textrm{o}^n}\right) - \operatorname{erfc}\left( \frac{\pi R^2 x^2-\pi (r_\textrm{i}^n)^2}{\sqrt{2\pi} r_\textrm{i}^n}\right)  \right|\\
        &= \frac{1}{2}\left| \operatorname{erfc}\left( \frac{c_2R^2}{\sqrt{2\pi} r_\textrm{o}^n}\right) - \operatorname{erfc}\left( \frac{c_1 R^2}{\sqrt{2\pi} r_\textrm{i}^n}\right) \right|\\
        &\leq \frac{1}{2}\left| \frac{\sqrt{2\pi} r_\textrm{o}^n}{c_2 R^2}\right| + \frac{1}{2}\left|\frac{\sqrt{2\pi} r_\textrm{i}^n}{c_1 R^2} \right| = O\left( \frac{1}{R} \right)
    \end{align*}
    where we in the second to last step used that $\operatorname{erfc}(x) \leq \frac{1}{x}$ for $x > 0$.

    A similar argument works for $x$ where $\pi R^2 x^2 > \pi (\Tilde{r}_\textrm{o}^n)^2$ which we skip in the interest of brevity. Since $|f_n(x) - \Tilde{f}_n(x)| = 0$ for $\pi (\Tilde{r}_\textrm{i}^n)^2 \leq \pi R^2 x^2 \leq \pi (\Tilde{r}_\textrm{o}^n)^2$ by the definition of $\Tilde{f}_n$, this proves the claim.

    Now using Lemma \ref{lemma:full_erfc_rearrangement} with $a = \pi (r_\textrm{o}^n)^2$, $b = \pi (r_\textrm{i}^n)^2$, $A = \sqrt{2\pi} r_\textrm{o}^n$ and $B = \sqrt{2\pi} r_\textrm{i}^n$, we get that
    \begin{align*}
        f_n^*(x) &= \underbrace{\frac{1}{2}\operatorname{erfc}\left( \frac{x - \pi \big((r_\textrm{o}^n)^2 - (r_\textrm{i}^n)^2 \big)}{\sqrt{2\pi} (r_\textrm{o}^n + r_\textrm{i}^n)} \right)}_{= \frac{1}{2}\operatorname{erfc}\left( \sqrt{2\pi}\frac{x-|\Omega_n|}{|\partial \Omega_n|} \right) =: g_n(x) } + O\left(\frac{1}{R}\right).
    \end{align*}
    We will also need to define $\Tilde{f}$ as the sum of all $\Tilde{f}_n$, i.e., $\Tilde{f}(x) = \sum_{n=1}^N \Tilde{f}_n(x)$. Since the functions $(\Tilde{f}_n)_n$ all have disjoint supports, we can write
    \begin{equation}\label{eq:ftilstar}
        \begin{aligned}
            \big|\big\{ x \geq 0 : \Tilde{f}^*(x) > \gamma \big\}\big| &=  \big|\big\{ x \geq 0 : \Tilde{f}(x) > \gamma \big\}\big| = \sum_{n=1}^N \big|\big\{ x \geq 0 : \Tilde{f}_n(x) > \gamma \big\}\big|
        \end{aligned}
    \end{equation}
    where we in the first step used that the measures of level sets are unaffected by rearrangements. For the $g_n$ functions, we can explicitly compute
    \begin{align*}
        |\{ x \geq 0 : g_n(x) > \gamma \}| = g_n^{-1}(\gamma) = |\Omega_n| + \operatorname{erfc}^{-1}(2\gamma) \frac{|\partial\Omega_n|}{\sqrt{2\pi}}.
    \end{align*}
    Now write $\varepsilon$ for the largest difference $|\Tilde{f}_n^* - g_n|$ over all $n$, and recall that this is $O(\frac{1}{R})$ since $N$ is finite, it then holds that
    \begin{align*}
        |\{ x \geq 0 : g_n(x) > \gamma + \varepsilon \}| &\leq |\{ x \geq 0 : \Tilde{f}_n^*(x) > \gamma \}| \leq |\{ x \geq 0 : g_n(x) > \gamma -\varepsilon \}|\\
        \implies |\Omega_n| + \frac{|\partial \Omega_n|}{\sqrt{2\pi}} \operatorname{erfc}^{-1}(2(\gamma + \varepsilon)) &\leq  |\{ x \geq 0 : \Tilde{f}_n(x) > \gamma \}| \leq |\Omega_n| + \frac{|\partial \Omega_n|}{\sqrt{2\pi}} \operatorname{erfc}^{-1}(2(\gamma - \varepsilon))\\
        \implies R^2|\Omega| + \frac{R|\partial \Omega|}{\sqrt{2\pi}} \operatorname{erfc}^{-1}(2(\gamma + \varepsilon)) &\leq |\{ x \geq 0 : \Tilde{f}^*(x) > \gamma \}| \leq R^2|\Omega| + \frac{R|\partial \Omega|}{\sqrt{2\pi}} \operatorname{erfc}^{-1}(2(\gamma - \varepsilon))
    \end{align*}
    where we in the last step summed over $n$ and plugged in \eqref{eq:ftilstar}. Equivalently, since $\Tilde{f}^*$ is decreasing on its support (since no $\Tilde{f}_n$ has derivative zero in an interval), we can write
    \begin{align*}
        \Tilde{f}^*\left( R^2|\Omega| + \frac{R|\partial \Omega|}{\sqrt{2\pi}} \operatorname{erfc}^{-1}(2(\gamma + \varepsilon)) \right) \leq \Tilde{f}^*(x) = \gamma \leq \Tilde{f}^*\left( R^2|\Omega| + \frac{R|\partial \Omega|}{\sqrt{2\pi}} \operatorname{erfc}^{-1}(2(\gamma - \varepsilon)) \right).
    \end{align*}
    Moreover, $\Tilde{f}^*$ is invertible and so we can conclude that for any $\gamma = \Tilde{f}^*(x)$,
    \begin{align*}
        R^2|\Omega| + \frac{R|\partial \Omega|}{\sqrt{2\pi}} \operatorname{erfc}^{-1}(2(\gamma + \varepsilon)) &\leq (\Tilde{f}^*)^{-1}(\gamma) \leq R^2|\Omega| + \frac{R|\partial \Omega|}{\sqrt{2\pi}} \operatorname{erfc}^{-1}(2(\gamma - \varepsilon))\\
        \implies \operatorname{erfc}^{-1}(2(\gamma + \varepsilon)) &\leq \frac{\sqrt{2\pi}}{R|\partial \Omega|}\Big[(\Tilde{f}^*)^{-1}(\gamma) - R^2|\Omega|\Big] \leq \operatorname{erfc}^{-1}(2(\gamma - \varepsilon))\\
        \implies \gamma - \varepsilon &\leq \frac{1}{2}\operatorname{erfc}\left( \frac{\sqrt{2\pi}}{R|\partial \Omega|}\Big[(\Tilde{f}^*)^{-1}(\gamma) - R^2|\Omega|\Big] \right) \leq \gamma + \varepsilon\\
        \implies \Tilde{f}^*(x) - \varepsilon &\leq \frac{1}{2}\operatorname{erfc}\left(\sqrt{2\pi} \frac{x-R^2|\Omega|}{R|\partial \Omega|} \right) \leq \Tilde{f}^*(x) + \varepsilon.
    \end{align*}
    With this we can finish the proof by recalling that $\big|\lambda_k^{R\Omega} - f^*(k) \big| = O(\frac{1}{R})$ from \eqref{eq:main_prop_lambda_rel}, that $\varepsilon = O(\frac{1}{R})$ since $N$ is finite and that $\Vert f^* - \Tilde{f}^* \Vert_\infty \leq \Vert f - \Tilde{f}\Vert_\infty = O(\frac{1}{R})$ by \cite[Chapter 3]{Lieb2001} and \eqref{eq:f_f_tilde_diff}.
\end{proof}

\subsection{Universality}
These proofs have ultimately led us to the $\operatorname{erfc}$ asymptotics by a central limit theorem argument which notoriously is universal in the sense that we get the same limit for a large class of probability distributions. In physics, the notion of universality \cite{Deift2006} near a boundary point is a well-studied phenomenon and in particular $\operatorname{erfc}$ universality is a very important result in random matrix theory \cite{Hedenmalm2021}. This setting is of particular interest due to its strong connection to localization operators, see \cite{Abreu2017, Abreu2017_sampta}.

An early piece of evidence in the direction of the boundary universality conjecture in random matrix theory \cite{Hedenmalm2021} was the calculation of the eigenvalue asymptotics for the special case of the Gaussian unitary ensemble (GUE), where each entry in the random matrix is a complex Gaussian random variable. In this setup, Forrester and Honner \cite{Forrester1999} showed that the density of the eigenvalues near a boundary point will converge to an $\operatorname{erfc}$ kernel in the limiting case. The Gaussian unitary ensemble precisely corresponds to the case of a localization operator with Gaussian window function and the disk as its symbol through an intricate procedure involving the Bargmann transform \cite{Abreu2017}. This correspondence inspires confidence that the link between random matrices and eigenvalues of localization operators may persist in the eigenvalue asymptotics for more general classes of symbols.

It is conceivable that for other $\Omega$ than those we have discussed, there could exist a random variable $X_\Omega$ such that $\lambda_k^\Omega = \mathbb{P}(X_\Omega \leq k)$ that has the property that this probability is related to the central limit theorem in the large $R$ limit.

In both \eqref{eq:loc_op_disk_eig_formula} and \eqref{eq:loc_op_ann_eig_formula}, the argument of the $\operatorname{erfc}$ function can be written as
\begin{align*}
    \sqrt{2\pi}\frac{k-R^2|\Omega|}{R|\partial \Omega|}
\end{align*}
and we conjecture that this behavior is universal.
\begin{conjecture}\label{conjecture:main}
    Let $\Omega \subset \R^2$ be compact, regular closed and have finite boundary, and let $\lambda_k^\Omega$ be the $k$-th eigenvalue of the localization operator $A_\Omega^{g_0}$. Then
    \begin{align*}
        \left| \lambda_k^{R\Omega} - \frac{1}{2}\operatorname{erfc}\left( \sqrt{2\pi}\frac{k-R^2|\Omega|}{R|\partial \Omega|} \right) \right| = O\left( \frac{1}{R} \right)
    \end{align*}
    where $\operatorname{erfc}$ is the complementary error function.
\end{conjecture}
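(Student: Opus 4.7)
The plan is to leverage the Bargmann transform to recast the problem, then invoke boundary universality from the theory of random normal matrices. Under the Bargmann transform, $A_{R\Omega}^{g_0}$ is unitarily equivalent to the Toeplitz operator $T_{R\Omega} = P_{\mathcal{F}} M_{\chi_{R\Omega}} P_{\mathcal{F}}$ on the Bargmann--Fock space $\mathcal{F}^2(\C)$, whose reproducing kernel yields (after normalization) precisely the Ginibre correlation kernel. Via this correspondence, the conjecture becomes a statement about edge universality for the spectrum of Toeplitz projections with indicator symbols, closely related to the Forrester--Honner and Hedenmalm--Wennman boundary universality results discussed in the excerpt.

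First I would establish a local edge model. Eigenfunctions of $T_{R\Omega}$ whose eigenvalues lie in $(\delta, 1-\delta)$ should be concentrated in an $O(1)$ neighborhood of $\partial(R\Omega)$ in the time-frequency plane, a localization provable via commutator estimates exploiting the Gaussian off-diagonal decay of the Fock reproducing kernel. Near a smooth boundary point, after a rigid motion the tangent geometry is a half-plane, and the half-plane Toeplitz operator admits an explicit continuous spectral density whose integrated counting function, per unit boundary length, equals $\frac{1}{\sqrt{2\pi}} \operatorname{erfc}^{-1}(2\lambda)$. This is the key input providing the $\operatorname{erfc}$ shape of the plunge profile.

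The second step converts local information into a global counting function by integrating the half-plane contribution along $\partial(R\Omega)$ and combining it with a Landau-type bulk count of eigenvalues close to $1$. The expected outcome is
\begin{align*}
    \#\{ k : \lambda_k^{R\Omega} > \lambda \} = R^2|\Omega| + \frac{R|\partial\Omega|}{\sqrt{2\pi}}\operatorname{erfc}^{-1}(2\lambda) + o(R)
\end{align*}
for fixed $\lambda \in (0,1)$, after which inverting this monotone relation in $\lambda$ is a routine rearrangement, yielding the conjectured $\operatorname{erfc}$ profile in closed form, in parallel with the final display chain of Proposition \ref{prop:loc_op_rot_inv_eig}.

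The principal obstacle is obtaining the sharp $O(1/R)$ rate. Boundary universality in random normal matrix theory is typically stated as convergence of correlation functions rather than a per-eigenvalue rate, so substantial quantitative analysis is needed---plausibly through resolvent expansions in the Toeplitz calculus sharpened by the rigidity of the Fock reproducing kernel, in the spirit of the Marceca--Romero--Speckbacher plunge bounds. A secondary difficulty is low boundary regularity: at corners the half-plane tangent model fails and wedge models must be analyzed separately. Such wedge contributions should furnish $O(1)$ corrections to the counting function and hence $O(1/R)$ corrections to $\lambda_k^{R\Omega}$ after inversion, consistent with the conjectured rate, but the matching argument must be set up carefully to absorb them into the error term.
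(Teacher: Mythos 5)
The statement you are proving is presented in the paper as a \emph{conjecture}: the paper contains no proof of it, only (i) a complete proof in the special case of rotationally invariant $\Omega$ (via the explicit Hermite-eigenfunction formula, a Berry--Esseen normal approximation of the Poisson CDF, and two rearrangement lemmas) and (ii) numerical evidence from LTFAT frame multipliers for general symbols. So there is no proof to compare yours against; what can be assessed is whether your argument closes the open problem. Your framing --- Bargmann transform, equivalence with Toeplitz projections on Fock space, and the Ginibre/Forrester--Honner edge kernel --- is exactly the heuristic the paper itself offers in its ``Universality'' subsection, so the strategy is well aligned with how the author motivates the conjecture. But as written it is a research program, not a proof: each of the three pillars (localization of plunge eigenfunctions to an $O(1)$ neighborhood of $\partial(R\Omega)$, a quantitative half-plane edge model, and the patching of local models into a global counting function) is asserted rather than established, and the first and third are themselves open at the level of generality and precision required.

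Two gaps are decisive. First, the error budget in your step two does not close. If $N_R(\lambda) = R^2|\Omega| + \frac{R|\partial\Omega|}{\sqrt{2\pi}}\operatorname{erfc}^{-1}(2\lambda) + E_R(\lambda)$, then inverting (as in the final display chain of the paper's Proposition \ref{prop:loc_op_rot_inv_eig}) perturbs the argument of $\operatorname{erfc}$ by $E_R(\lambda)/(R|\partial\Omega|)$, so the conjectured $O(1/R)$ bound on $\lambda_k^{R\Omega}$ requires $E_R(\lambda) = O(1)$ \emph{uniformly in $\lambda$} --- a rigidity statement far stronger than the $o(R)$ you claim, which after inversion yields only $o(1)$ and hence not even a rate. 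You flag the absence of rates in boundary universality as ``the principal obstacle,'' but that obstacle is the entire content of the conjecture; without it the argument proves nothing beyond what the paper's heuristics already suggest. Second, the hypotheses of the conjecture (``compact, regular closed, finite boundary'') admit sets with no tangent structure at a dense set of boundary points, where neither the half-plane nor the wedge model applies; the paper itself warns that stronger conditions such as maximal Ahlfors regularity of $\partial\Omega$ are likely needed. A correct writeup would either restrict to $C^1$ (or piecewise-$C^1$) boundaries and prove the localization and rigidity inputs, or explain why the Fock-space kernel decay forces them --- neither is done here.
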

In particular, this conjecture implies that the plunge region has width comparable to $|\partial \Omega|$ which is a weaker result which we will investigate in Section \ref{sec:numerics}.

\begin{remark}
    In the original Fourier concentration operator paper by Landau and Widom \cite{Landau1980} and later work \cite{Widom1982, Sobolev2012}, eigenvalue asymptotics are typically described by means of expansions of the eigenvalue counting function or, more generally, the quantity
    \begin{align*}
        \tr(f(A(\sigma)))
    \end{align*}
    where $A(\sigma)$ is a form of Fourier concentration operator defined by the kernel function $\sigma$ and $f$ is a well-behaved function e.g., $f(\lambda) = \chi_{[\lambda_1, \lambda_2]}(\lambda)$. In this formulation, we are essentially inverting the formulation in Conjecture \ref{conjecture:main} by finding the last index $k$ corresponding to an eigenvalue $\lambda$. 

    We can perform the same procedure in reverse and translate Conjecture \ref{conjecture:main} into a statement on the eigenvalue counting function asymptotics. Ignoring the error term in the interest of brevity, the interpretation is that 
    \begin{align}\label{eq:conj_rewritten}
        N_R(\lambda) \approx R^2|\Omega| + R|\partial \Omega|\frac{\operatorname{erfc}^{-1}(2\lambda)}{\sqrt{2\pi}}
    \end{align}
    where $N_R$ is the eigenvalue counting function. We will investigate this weaker form of the conjecture in Section \ref{sec:numerics}.
\end{remark}

It is likely that we have to require stronger conditions on $\Omega$ for the conjecture to hold. In particular the condition of $\Omega$ having maximally Ahlfors regular boundary has proven important in recent work by Marceca and Romero \cite{Marceca2023}. However in the absence of evidence to the contrary, we present the conjecture in full generality.

A reason to believe in plunge profile universality is the min-max formulation of the eigenvalues of $A_\Omega^g$ discussed in e.g. \cite{Abreu2015}, namely
\begin{align*}
    \lambda_k^\Omega = \max\left\{ \int_\Omega |V_g f(z)|^2\,dz : \Vert f \Vert_{L^2} = 1, f \perp h_1^\Omega, \dots, h_{k-1}^\Omega \right\}.
\end{align*}
This implies that the plunge eigenvalues belong to the eigenfunctions which are supported around the boundary of $\Omega$. In particular, the values $\lambda_k^\Omega$ depend on how the short-time Fourier transforms of orthogonal eigenfunctions $h_k^\Omega$ repel each other around the boundary. In the large $R$ limit, the boundary $\partial R\Omega$ is approximately straight both for $\Omega = B(0,1)$ and general $\Omega$, save for pathological examples. It is therefore not unreasonable that the eigenfunctions, which do not scale with $R$, would have the same behavior for any $\Omega$ when $R$ is large as these local objects do not sense the global structure of $R \Omega$.

If the window function $g$ induces some form of anisotropy in the time-frequency plane, the number of spectrograms $|V_g h_k^\Omega|^2$ which occupy a given stretch of $\partial \Omega$ could be dependent on the angle for this approximate line segment. This is not an issue for the interior, where there are no boundary effects to consider and all spectrograms take up the same area, 1. In \cite[Section V.B]{daubechies1988_loc}, it is shown that when the window is a dilated Gaussian and the symbol is a corresponding ellipse, the eigenvalues are the same as for a symmetric disk and the standard Gaussian. Hence, we know that the conjecture is false if we remove the condition of the window being the standard Gaussian. This issue is investigated numerically in Section \ref{sec:window_dep} below.

Still, we have so far only presented heuristic arguments in favor of Conjecture \ref{conjecture:main}. Our main evidence comes in the form of computing $\lambda_k^\Omega - \frac{1}{2}\operatorname{erfc}\big(\sqrt{2\pi} \frac{k-|\Omega|}{|\partial \Omega|}\big)$ for a large collection of $\Omega$ for frame multipliers. The strong correspondence between results for localization operators and Gabor multipliers has been investigated for a long time and holds up well, from proving the same eigenvalue plunge behavior \cite{Feichtinger2001, Feichtinger2003} to showing trace-class convergence for dense lattices \cite{Feichtinger2024} and accumulated spectrogram behavior \cite{Halvdansson2025}.

\section{Numerical verification}\label{sec:numerics}
In this section we attempt to verify Conjecture \ref{conjecture:main} numerically using the Large Time-Frequency Analysis Toolbox (LTFAT) \cite{ltfatnote030}. Obviously, we are not able to realize localization operators as they are continuous objects and even Gabor multipliers are based on samples of $L^2(\R^d)$ functions. However, the finite Gabor multipliers, or \emph{frame multipliers}, we can realize in LTFAT which are based on vector representations of signals are likely to approximate Gabor multipliers well. Moreover, those Gabor multipliers in turn will have similar eigenvalue behavior to the corresponding localization operators as they are close in trace-class and Hilbert-Schmidt norms for dense lattices \cite{Feichtinger2024, Feichtinger2003}.
    
\subsection{Setup}
In LTFAT, the \textsf{framemuleigs} function takes in a symbol, analysis frame and synthesis frame and returns the eigenvalues and optionally the eigenvectors of the corresponding frame multiplier. The frames are in turn determined by the time-hop distance \textsf{a}, the number of frequency channels \textsf{M} and the window function \textsf{g}. To avoid under- and over-sampling, we always set the signal length \textsf{L} to $\textsf{L} = \textsf{a} \times \textsf{M}$ and unless stated otherwise, we use a standard Gaussian window function \textsf{g = pgauss(L)}.

The full code is available on GitHub\footnote{\url{https://github.com/SimonHalvdansson/Time-Frequency-Plunge-Profiles}} and contains extensive comments with explanations. In LTFAT, symbols are defined on a $\textsf{M} \times \textsf{M}$ grid and we have used a pipeline where images can be converted to binary masks to simplify experimentation with symbols which are difficult to define in code. The area $|\Omega|$ is computed by summing the symbol while for the perimeter length $|\partial \Omega|$ we use the built-in MATLAB function \textsf{regionprops}. To account for the different coordinates in discrete phase space, the symbol area is multiplied by $\textsf{a / M}$ and the perimeter by $\sqrt{\textsf{a/M}}$.

To support a weaker version of the conjecture numerically, we will write out the length of the perimeter of the symbol as reported by \textsf{regionprops}, the number of eigenvalues in the plunge region with $\delta = 0.1$ as well as their quotient in a textbox for each experiment below. By \eqref{eq:conj_rewritten} which follows from the conjecture, this quotient should be approximately
\begin{align*}
    \frac{\sqrt{2\pi}}{\operatorname{erfc}^{-1}(0.2) - \operatorname{erfc}^{-1}(1.8)} \approx 1.3831
\end{align*}
across different symbols and frames. In the figures we will mark the plunge region from $\lambda = 0.9$ to $\lambda = 0.1$ with a gray shade. The results are summarized in Table \ref{table:summary} below.

\subsection{Symbol and frame dependence}
We will first consider the case which we know best, that where the symbol is a disk. The error we observe here will serve as a benchmark for all upcoming experiments as they should only come from the following factors:
\begin{itemize}
    \item Finite size symbol (not asymptotic limit),
    \item Localization operator to Gabor multiplier error (lattice effects),
    \item Gabor multiplier to frame multiplier error (discrete functions),
    \item Lattice boundary length (measuring $|\partial \Omega|$ using \textsf{regionprops}).
\end{itemize}
For this reason, we expect that the errors we observe for the disk should be a lower bound for the errors we observe, a form of noise floor.
\begin{figure}[H]
    \centering
    \includegraphics[width=0.9\linewidth]{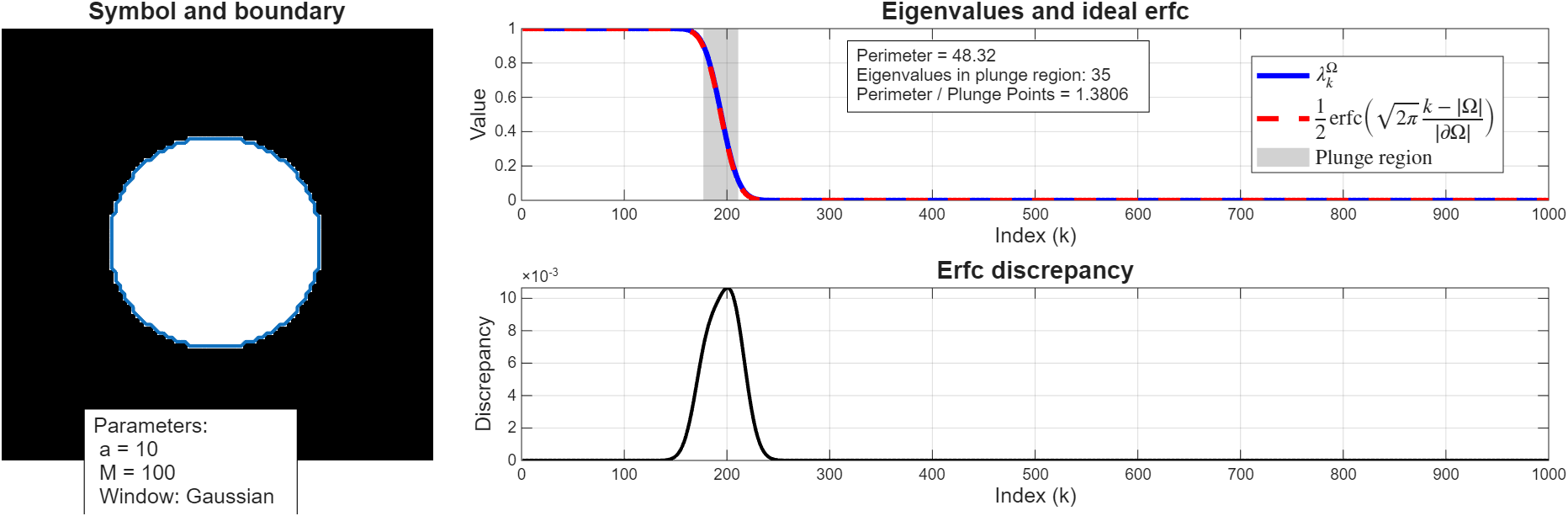}
    \caption{Experiment with $\textsf{a} = 10, \textsf{M} = 100$, Gaussian window and a disk as the symbol. Maximum error is close to $1.0$\%.}
    \label{fig:low_res_circle}
\end{figure}
\begin{figure}[H]
    \centering
    \includegraphics[width=0.9\linewidth]{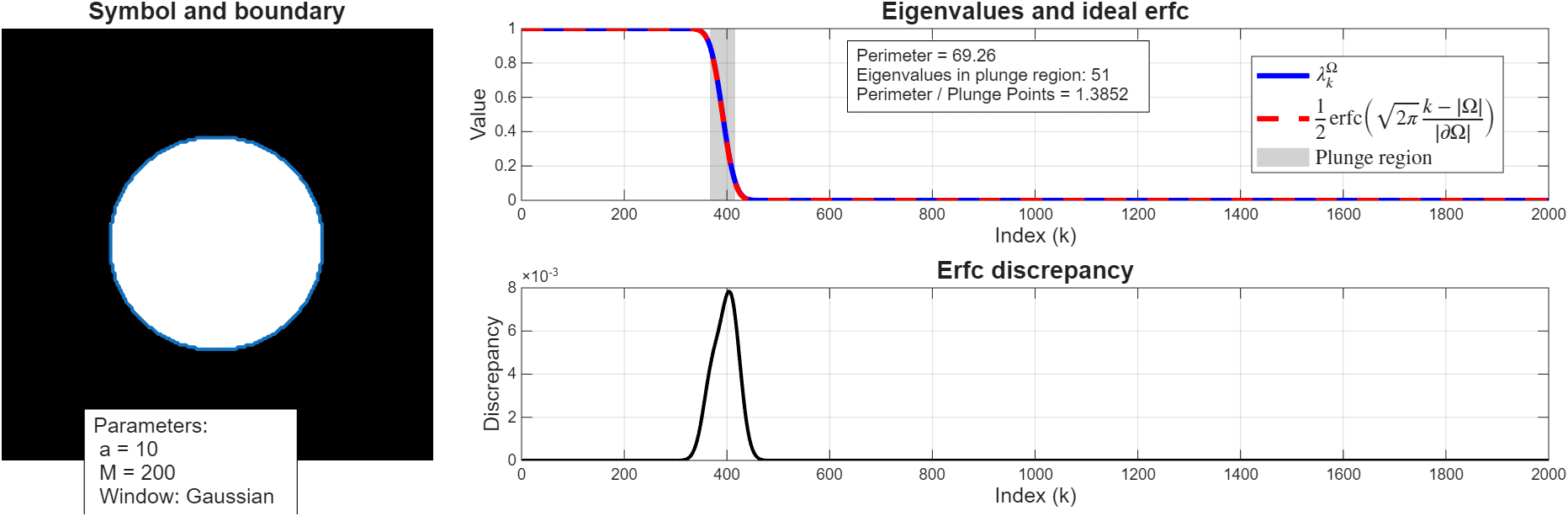}
    \caption{Experiment with $\textsf{a} = 10, \textsf{M} = 200$, Gaussian window and a disk as the symbol. Maximum error is close to $0.8$\%.}
    \label{fig:high_res_circle}
\end{figure}

The higher value for \textsf{M} corresponds to a denser lattice which explains the smaller peak discrepancy in Figure \ref{fig:high_res_circle} compared to Figure \ref{fig:low_res_circle}.

Next we look at a collection of different symbols and frame parameters. For a star shape we observe considerably higher errors for a sparse lattice but for the $\textsf{a} = 10, \textsf{M} = 100$ lattice the error is comparable to that for the disk, see Figures \ref{fig:high_res_star} and \ref{fig:low_res_star}.
\begin{figure}[H]
    \centering
    \includegraphics[width=0.9\linewidth]{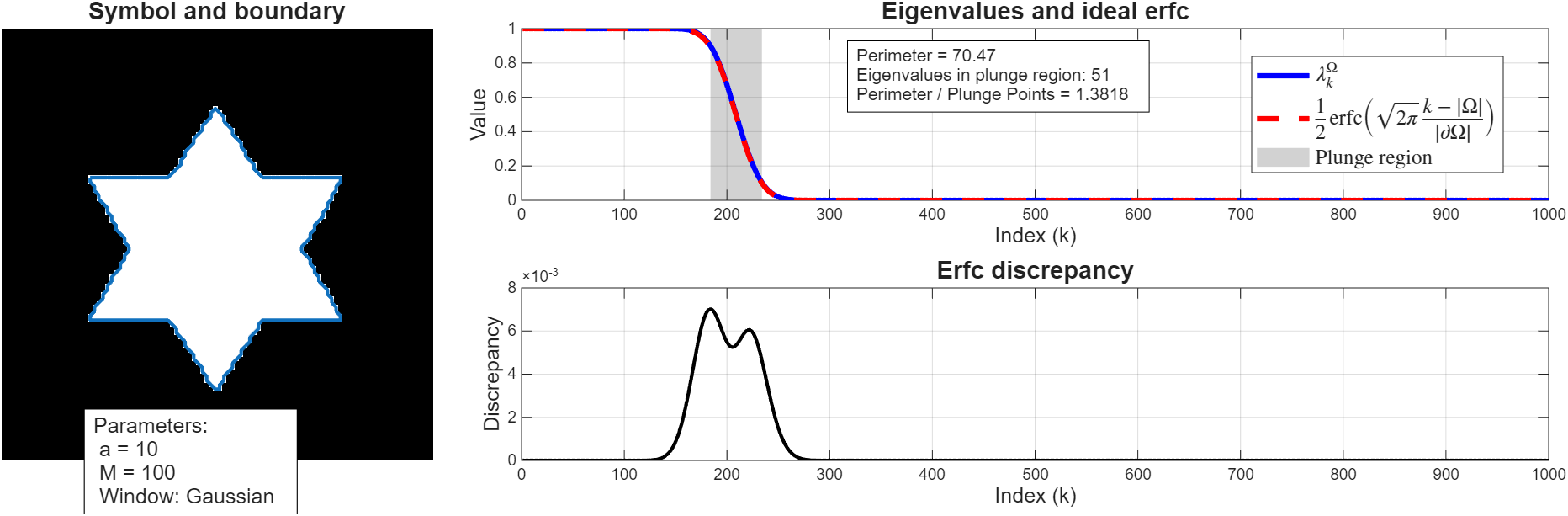}
    \caption{Experiment with $\textsf{a} = 10, \textsf{M} = 100$, Gaussian window and a star shape as the symbol. Maximum error is close to $0.7$\%.}
    \label{fig:high_res_star}
\end{figure}
\begin{figure}[H]
    \centering
    \includegraphics[width=0.9\linewidth]{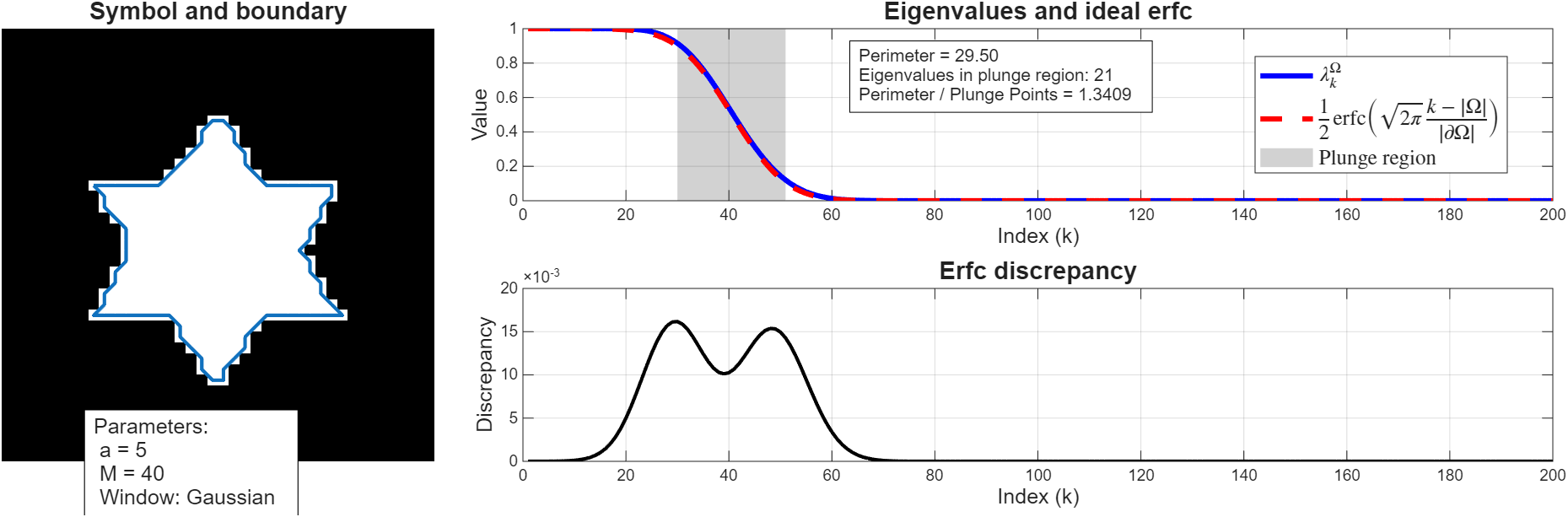}
    \caption{Experiment with $\textsf{a} = 5, \textsf{M} = 40$, Gaussian window and a star shape as the symbol. Maximum error is close to $1.6$\%.}
    \label{fig:low_res_star}
\end{figure}
The symbols in Figures \ref{fig:lc} and \ref{fig:tiles} are poorly conditioned as they are thin, which means that eigenfunctions belonging to the plunge region are likely to be influenced by the symbol boundary on the opposite side. In this case, we see considerably higher errors.
\begin{figure}[H]
    \centering
    \includegraphics[width=0.9\linewidth]{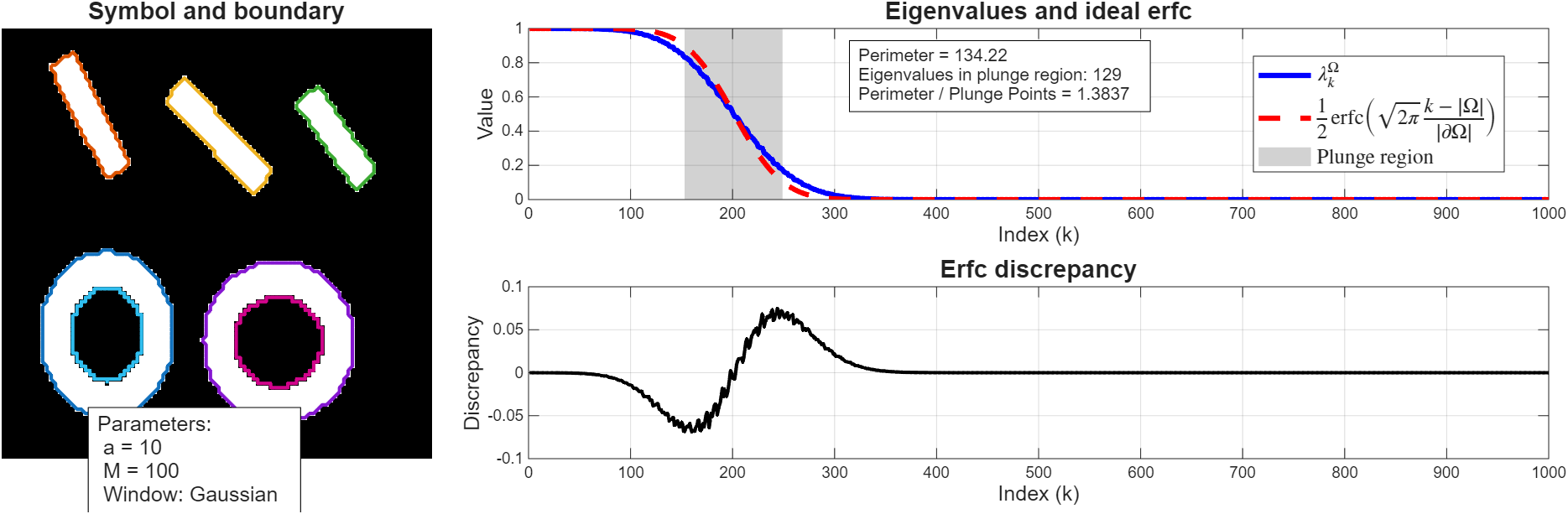}
    \caption{Experiment with $\textsf{a} = 10, \textsf{M} = 100$, Gaussian window and lines and circles as the symbol. Maximum error is close to $7.5$\%.}
    \label{fig:lc}
\end{figure}
\begin{figure}[H]
    \centering
    \includegraphics[width=0.9\linewidth]{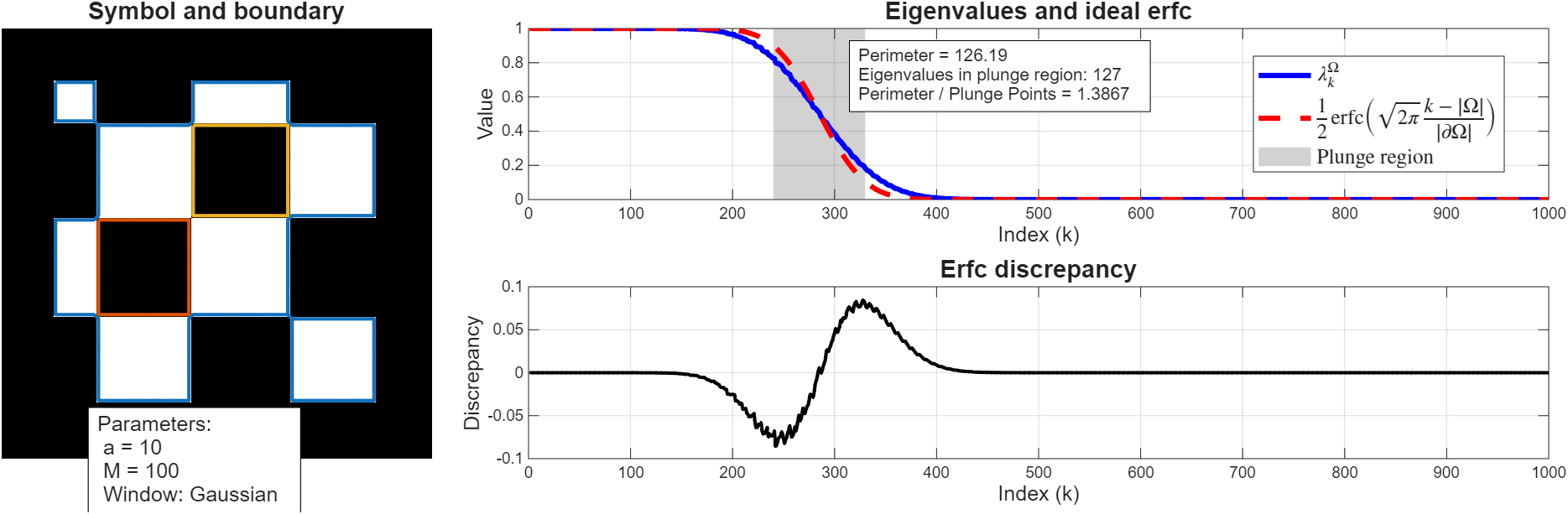}
    \caption{Experiment with $\textsf{a} = 10, \textsf{M} = 100$, Gaussian window and tiles as the symbol. Maximum error is close to $8.0$\%.}
    \label{fig:tiles}
\end{figure}
For a more well-behaved but still intricate symbol, see Figure \ref{fig:blob}.
\begin{figure}[H]
    \centering
    \includegraphics[width=0.9\linewidth]{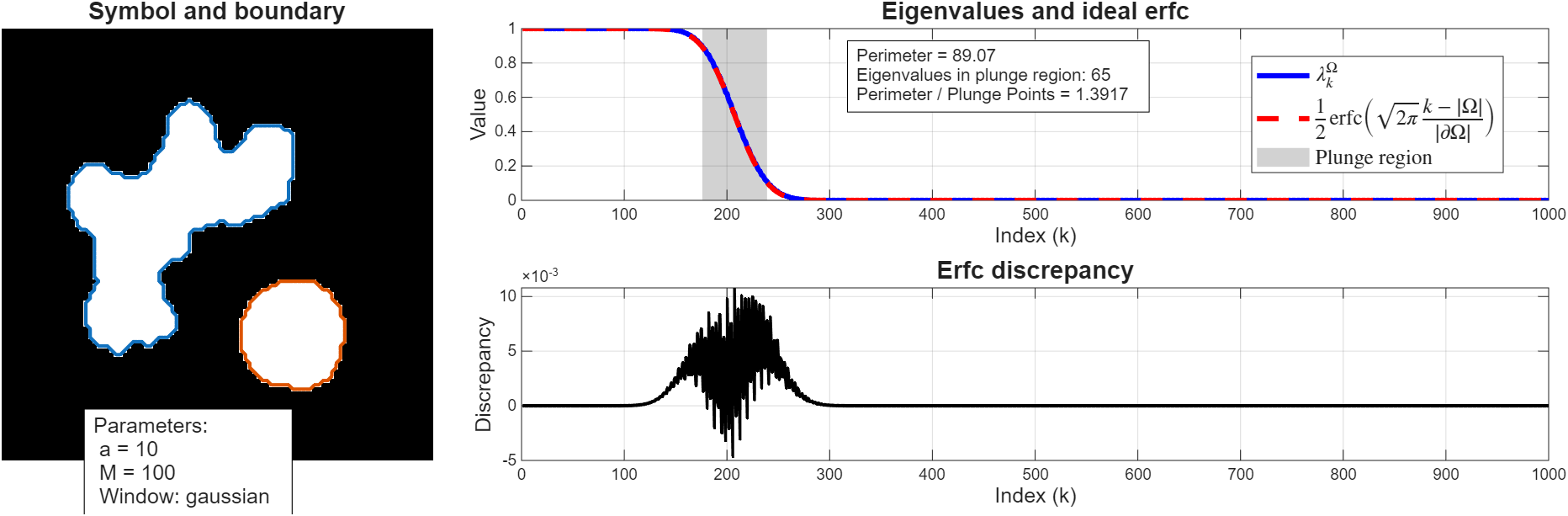}
    \caption{Experiment with $\textsf{a} = 10, \textsf{M} = 100$, Gaussian window and blobs as the symbol. Maximum error is close to $1.0$\%.}
    \label{fig:blob}
\end{figure}
The case of elliptical symbols was discussed in \cite[Section V.B]{daubechies1988_loc} where it was shown that if the Gaussian window was dilated appropriately, the eigenvalue behavior is the same as for the disk with the same area. However, the perimeter of an ellipse differs significantly from that of the disk with the same area, which is why we required the window to be the standard Gaussian. In Figure \ref{fig:ellipse}, we verify that the conjecture still appears to hold in this case.
\begin{figure}[H]
    \centering
    \includegraphics[width=0.9\linewidth]{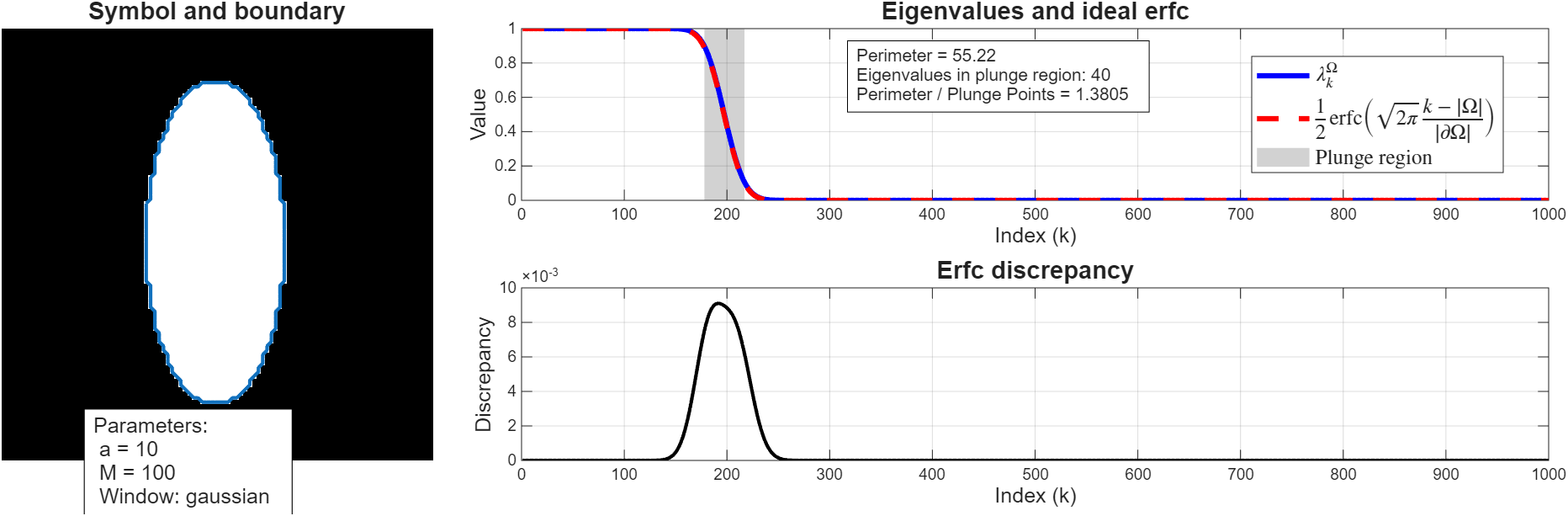}
    \caption{Experiment with $\textsf{a} = 10, \textsf{M} = 100$, Gaussian window and an ellipse as the symbol. Maximum error is close to $0.9$\%.}
    \label{fig:ellipse}
\end{figure}

Lastly, we look at a square symbol (Figure \ref{fig:square}) where the results are similar to those for the disk or star.
\begin{figure}[H]
    \centering
    \includegraphics[width=0.9\linewidth]{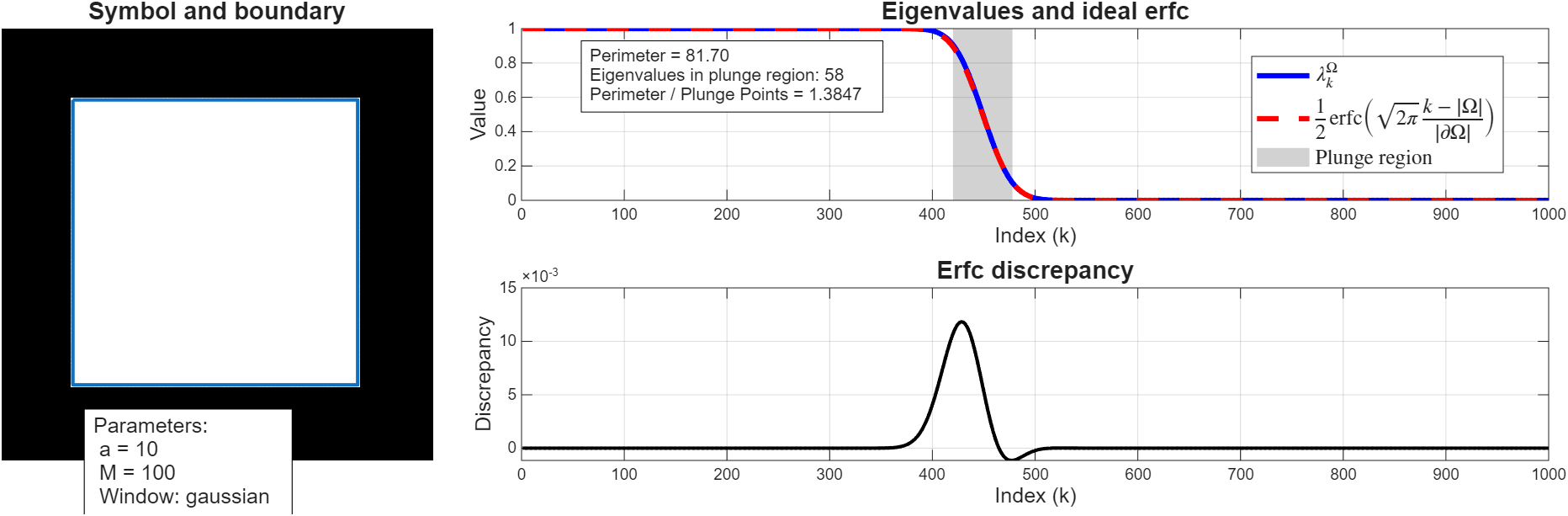}
    \caption{Experiment with $\textsf{a} = 10, \textsf{M} = 100$, Gaussian window and a square as the symbol. Maximum error is close to $1.2$\%.}
    \label{fig:square}
\end{figure}

The results from all the above figures are summarized in Table \ref{table:summary}.

\begin{table}[H]
    \caption{Summarized results for a collection of symbols and parameters. Here $\# P$ denotes the cardinality of the plunge region with parameter $\delta = 0.1$.}
    \begin{tabular}{lll|ll}
        \toprule
        \textbf{Symbol}   & \textbf{\textsf{a}}  & \textbf{\textsf{M}}  & $\boldsymbol{L^\infty}$ \textbf{error} & $\boldsymbol{|\partial \Omega| / \#P} $\\ \midrule
Disk & $10$ & $100$ & $1.0$\% & 1.3806\\
Disk & $10$ & $200$ & $0.8$\% & 1.3852\\
Star & $10$ & $100$ & $1.0$\% & 1.3818\\
Star & $5$ & $40$ & $1.6$\% & 1.3409\\
Lines and circles & $10$ & $100$ & $7.5$\% & 1.3837\\
Tiles & $10$ & $100$ & $8.0$\% & 1.3867\\
Blobs & $10$ & $100$ & $1.0$\% & 1.3917\\
Ellipse & $10$ & $100$ & $0.9$\% & 1.3805\\
Square & $10$ & $100$ & $1.2$\% & 1.3847\\
\bottomrule
    \end{tabular}
    \label{table:summary}
\end{table}

The tiles and lines and circles examples have considerably higher errors than the other symbols and were chosen to have a high $|\partial \Omega| / |\Omega|$ ratio. For the symbols for which the opposite is true, the $\operatorname{erfc}$ curve is remarkably close to the true eigenvalue behavior. Note that as we increase $R$, the $|\partial \Omega| / |\Omega|$ ratio goes to zero.

\subsection{Window dependence}\label{sec:window_dep}
All of the examples we have seen so far have been with a Gaussian window. In this section, we show that the fitted curve has a markedly larger discrepancy when the window is a box function, i.e., an indicator function around zero of width $W$, $g(t) = \chi_{[-W/2, W/2]}(t)$, which has worse time-frequency concentration than the standard Gaussian, and offer an explanation for why. 

In Figure \ref{fig:box_non_gauss}, we have repeated the experiment from the previous section with a box window function and get a noticeably larger discrepancy.
\begin{figure}[H]
    \centering
    \includegraphics[width=0.9\linewidth]{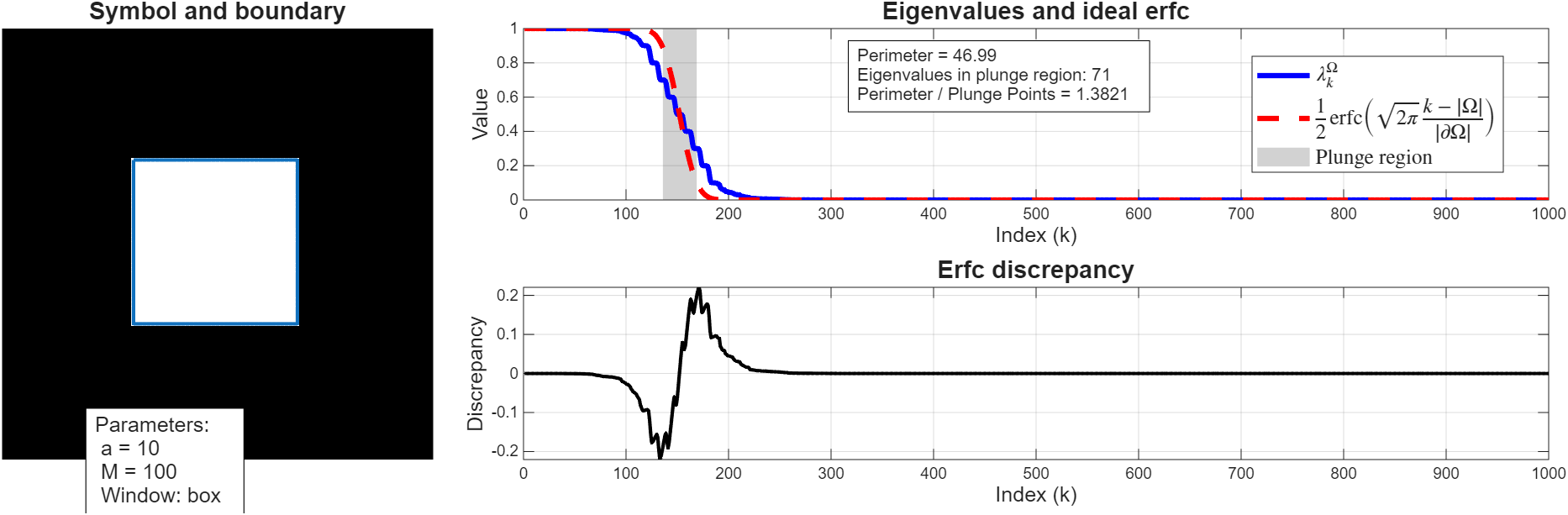}
    \caption{Eigenvalue decay for frame multiplier with box window function.}
    \label{fig:box_non_gauss}
\end{figure}
Other symbols also have similarly uneven decay which is also wider than that for Gaussian windows.

As mentioned near the end of Section \ref{sec:main_loc_op_disk}, we have reason to believe that the spectrograms corresponding to different eigenfunctions are more separated when the window function has uneven concentration in time versus frequency. To investigate this, we study the spectrograms of three eigenfunctions whose eigenvalues are closest to $\lambda = 1/2$ in the case where the symbol is a disk. By mapping one spectrogram to the red channel, one to the green channel and the last one to the blue channel, we can visualize all three eigenfunctions in the same figure and study their overlap. This is done in Figure \ref{fig:circ_specs}.
\begin{figure}[H]
    \centering
    \includegraphics[width=0.76\linewidth]{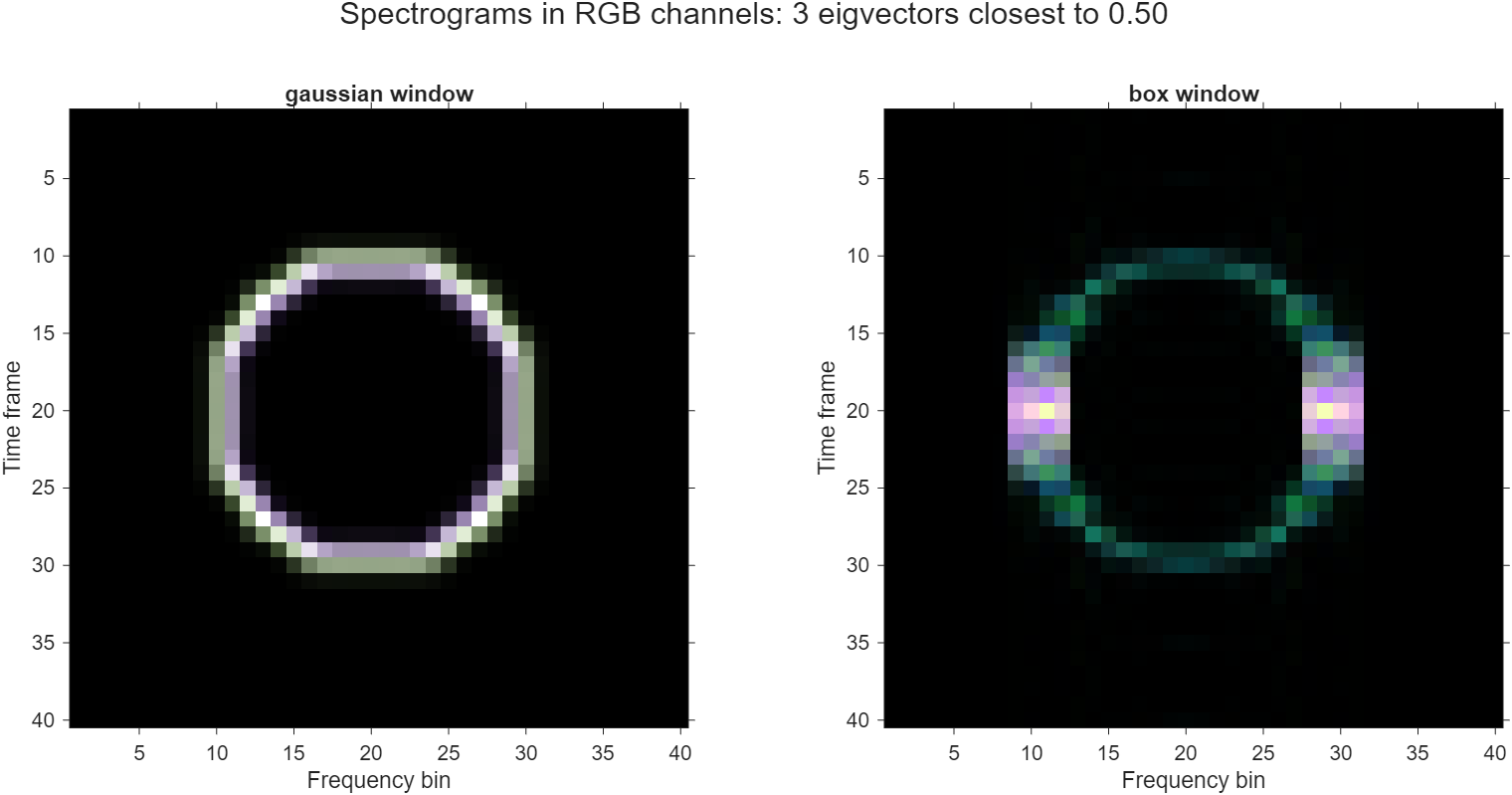}
    \caption{Accumulated spectrograms of three eigenfunctions closest to eigenvalue $1/2$ mapped to the red, green and blue color channels for a box window (left) and a Gaussian window (right).}
    \label{fig:circ_specs}
\end{figure}
In the figure, we see that the accumulated spectrogram with Gaussian window is almost white, meaning that the spectrograms of the three eigenfunctions mostly overlap. In contrast, for the box window setup we see that the eigenfunctions are separated in time and frequency.

\subsection*{Acknowledgments} The author thanks the anonymous referee for helpful comments that improved the manuscript.

\printbibliography
	
\end{document}